\documentclass[12pt, oneside]{scrartcl}
\usepackage[english]{babel}
\usepackage[T1]{fontenc}
\usepackage{makecell}
\usepackage{amssymb}
\usepackage{amsthm}
\usepackage{amsmath}
\usepackage{hyperref}
\usepackage{hhline}
\usepackage{longtable}
\usepackage{amscd}
\usepackage{array}
\usepackage{delarray}
\usepackage{multicol}
\usepackage{makeidx}
\usepackage{enumitem}

\usepackage{float}


\usepackage{tikz}
\usepackage{calc}
\usepackage{adjustbox}
\usetikzlibrary{angles,quotes}
\usetikzlibrary{calc}
\usetikzlibrary{patterns}
\usetikzlibrary{graphs}
\usetikzlibrary{graphs.standard}
\usetikzlibrary{decorations.markings,shapes.geometric,positioning}

\newtheorem{theorem}{Theorem}
\newtheorem*{maintheorem}{Theorem}

\newtheorem{corollary}{Corollary}[theorem]
\newtheorem{conjecture}{Conjecture}
\newtheorem{definition}{Definition}

\newtheorem{lemma}{Lemma}


\newtheorem{observation}{Observation}

\usepackage{etoolbox}
\AfterEndEnvironment{proof}{\par\medskip} 

%
%
%

%
%
%
\usepackage{tikz}

\author{Mahabba El Sahili\footnote{Department of Mathematics, American University of Beirut, Beirut, Lebanon} \and Ayman El Zein\footnote{Computer Science Department, University of Sciences and Arts in Lebanon, Beirut, Lebanon}}

\begin{document} 
\title{Oriented Hamiltonian Paths in Tournaments: Stability under Arc Deletion}
\maketitle

\begin{abstract}
Havet and Thomass\'{e} proved that every tournament of order $n\geq 8$ contains every oriented Hamiltonian path, which was conjectured by Rosenfeld. Recently, it was shown that in any tournament $T$ of order $n\geq 8$, there exists an arc $e$ such that $T-e$ contains any oriented Hamiltonian path. A natural extension of this problem is to study the stability of this property under arbitrary arc deletion. In this paper, we prove that every arc $e$ in a tournament $T$ of order $n\geq 8$ satisfies that $T-e$ contains every oriented Hamiltonian path, except for some explicitly described exceptions.
\end{abstract}

\noindent\textbf{Mathematics Subject Classification:} 05C20, 05C38.\\
\textbf{Keywords:} Hamiltonian paths, Tournaments, Arc removed, Arc deletion.

\section{Introduction}
A classical result of R\'{e}dei \cite{Red} asserts that every finite tournament contains an odd number of Hamiltonian directed paths, and thus at least one. Since then, the study of the existence of oriented paths in digraphs, and especially tournaments, has yielded a line of results. In $1971$, Gr\"{u}nbaum \cite{Gr} proved the existence of antidirected Hamiltonian paths in tournaments with the exception of the circuit triangle, the regular tournament of order five, and the Paley tournament of order seven. Subsequently, considering an arbitrary orientation of paths, Rosenfeld \cite{Ros} conjectured the following: There exists an integer $N\geq 8$ such that any tournament of order $n\geq N$ contains any oriented Hamiltonian path. This conjecture was proved in $1986$ by Thomason \cite{Thomason}, who showed the existence of such an $N$ less than $2^{128}$, following a series of partial results \cite{Alsp,For}. In $2000$, Havet and Thomass\'{e} \cite{Hav} settled the problem by proving that $N=8$ is the tight bound. Recently, Bou Hanna \cite{hanna} introduced a new proof of the conjecture for $N=8$.

A natural question that arises is the robustness of such results under modifications such as arc deletion. Thomassen \cite{Thomassen} conjectured that if $I$ is a set of $k-1$ arcs in a $k$-strong tournament $T$, then $T-I$ has a Hamiltonian cycle. This conjecture was proved by Fraisse and Thomassen \cite{Fra}, and recently a stronger result was proved by Bang-Jensen et al. \cite{Ban}. A natural problem that arises is the existence of oriented paths in a tournament once an arc is removed. In $2025$, El Zein \cite{Ez} proved that, in any tournament $T$ of order $n\geq 8$, there exists an arc $e$ such that $T-e$ contains every oriented Hamiltonian path. In particular, this arc joins vertices having a maximum out- and in-degree. Independently, in a seminar on Graph Theory, El Sahili \cite{ES} conjectured the following.
\begin{conjecture}
   If $T$ is a strong tournament of order $n\geq 8$, $e$ is an arc of $T$, and $P$ is an oriented path of order $n$, then $T-e$ contains $P$. 
\end{conjecture}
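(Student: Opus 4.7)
The plan is to combine the Havet--Thomass\'e theorem with a local exchange argument enabled by the strong connectivity of $T$. Let $T$ be a strong tournament of order $n \geq 8$, $e = uv$ an arc of $T$, and $P = p_1 p_2 \cdots p_n$ an oriented Hamiltonian path. By Havet--Thomass\'e, $T$ admits an embedding of $P$; if some embedding avoids $e$, we are done. The content of the proof is therefore to show that when every embedding of $P$ in $T$ uses the arc $e$, we can produce a new embedding bypassing $e$, using the fact that $T$ is strong.

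First I would dispatch the base case where $P$ is a directed Hamiltonian path. By Camion's theorem, the strong tournament $T$ contains a Hamiltonian directed cycle $C$, and removing from $C$ any arc other than $e$ gives a directed Hamiltonian path of $T-e$, establishing the claim. For the general case, I would decompose $P$ into its maximal directed blocks $B_1 B_2 \cdots B_k$ and proceed by induction on $k$, revisiting the block-and-median-order argument of Havet--Thomass\'e (or the variant of Bou~Hanna) with the additional constraint of avoiding $e$. Given an embedding $\phi$ whose image contains $e$, say with $\phi(p_j p_{j+1})=e$, the arc $p_j p_{j+1}$ lies in some block $B_i$; the strategy is to locally reroute $\phi$ around $u$ and $v$ using the directed $v$-to-$u$ path provided by strong connectivity of $T-e$.

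The technical heart is a case analysis on the position of the arc $e$ relative to the block $B_i$ and to the endpoints of $B_i$ within $P$. When $B_i$ has length at least two, there is natural slack: one can replace the sub-embedding of $B_i$ using vertices reached from $v$ without traversing $e$. When $B_i$ has length one, the arc $e$ sits at the transition between two blocks, and we must swap the roles of $u$ and $v$ with nearby vertices whose local in/out-degrees permit it, again relying on strong connectivity to guarantee such vertices exist.

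I expect the main obstacle to be the antidirected Hamiltonian path, where $P$ has $k=n-1$ blocks each of length one. This configuration minimises the available local swaps and is the recurring source of exceptional tournaments in the classical results (Gr\"unbaum, Havet--Thomass\'e). Handling it will likely require a finer ad hoc analysis exploiting both $n\geq 8$ and the strong connectivity hypothesis to rule out each would-be exception; the conjecture, by insisting that $T$ be strong, is precisely calibrated to eliminate the exceptions that appear in the non-strong setting described in the paper's main theorem.
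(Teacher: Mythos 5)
Your proposal is a plan rather than a proof, and its central step does not work as described. You propose to take an embedding $\phi$ of $P$ in $T$ that uses the arc $e=(u,v)$ and to ``locally reroute $\phi$ around $u$ and $v$ using the directed $v$-to-$u$ path provided by strong connectivity of $T-e$.'' But $P$ is Hamiltonian, so every vertex of $T$ is already in the image of $\phi$; any directed $v$-to-$u$ path in $T-e$ passes through vertices that are occupied elsewhere in the embedding and cannot be spliced in without a global rearrangement. The same objection applies to the ``swap the roles of $u$ and $v$ with nearby vertices'' step: you give no mechanism guaranteeing that such a swap preserves the block structure of $P$, and the entire case analysis on the position of $e$ within a block --- which you correctly identify as the technical heart --- is deferred rather than carried out. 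Knowing only that \emph{every} embedding of $P$ uses $e$ gives almost no exploitable structure, which is why the paper does not argue by perturbing an existing embedding at all: it constructs $P$ directly in $D=T-e$, anchored at an endpoint $x$ of the deleted arc, by applying the Havet--Thomass\'e outsection theorem (Theorem \ref{HT}) and Corollaries \ref{corollary1}--\ref{corollary2} to subtournaments such as $T_x$ and $T-\{x,u\}$, with the $68$ exceptions absorbed by Lemma \ref{lemma1} and Observations \ref{obs1}--\ref{obs3}. Your closing remark that strongness is ``calibrated to eliminate the exceptions'' is a correct diagnosis of the role of the hypothesis, but it is an observation, not an argument.

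Two smaller points. In your base case, removing from the Hamiltonian cycle $C$ ``any arc other than $e$'' yields a directed Hamiltonian path that still \emph{contains} $e$ whenever $e\in E(C)$; you should instead remove $e$ itself if it lies on $C$, and an arbitrary arc otherwise. Finally, the antidirected path is not in fact the bottleneck in the paper's proof (paths with at least three blocks are comparatively easy because $v_i$ and $v_j$ can absorb low-out-degree vertices, as in the $d^+_D(x)=0$ step); the delicate cases are rather the directed and two-block paths, which are exactly the shapes occurring in the special exceptions.
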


Indeed, if $T$ is not strong, this may not hold. For instance, let $e=(x,y)$ and $P=v_1\cdots v_n$. Take the following two cases:
 \begin{enumerate}[label=(\roman*)]
     \item $P$ is directed, $x$ and $y$ have the same inneighborhood and outneighborhood in $T-e$, and every vertex of $N^-(x)-\{y\}$ dominates every vertex of $N^+(x)-\{y\}$
     \item $P$ has exactly two blocks,
     $x$ and $y$ are sinks in $T-e$ when $(v_1,v_2)$ is a forward arc, and $x$ and $y$ are sources in $T-e$ otherwise.
 \end{enumerate}
 
Clearly, in these cases, which we call \textit{special exceptions}, $T-e$ does not contain $P$.

In this paper, we provide a complete characterization of when the deletion of an arbitrary arc preserves the existence of any oriented Hamiltonian path. We show that the only obstructions are the two explicit special exceptions, as stated in the following theorem.

\begin{maintheorem}
Let $T$ be a tournament of order $n\geq 8$, $P$ be a path of order $n$, $x,y\in V(T)$, and $D$ the digraph obtained from $T$ by deleting the arc joining $x$ and $y$. Then, $D$ contains $P$ if and only if $(T,P,\{x,y\})$ is not a special exception.
\end{maintheorem}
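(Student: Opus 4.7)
The ``only if'' direction is immediate from the definitions of the two special exceptions: in case (i), the twin structure of $x,y$ in $T-e$ together with the domination $N^-(x)\setminus\{y\}\to N^+(x)\setminus\{y\}$ forces every directed Hamiltonian path of $T$ to place $x$ and $y$ consecutively; in case (ii), every embedding of a two-block path $P$ into $T$ must send its unique interior block boundary to $\{x,y\}$. In both cases the arc of $T$ joining $x$ and $y$ is used by every copy of $P$, so $D=T-e$ cannot contain $P$.

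For the ``if'' direction, assume $(T,P,\{x,y\})$ is not a special exception, let $e$ be the arc of $T$ between $x$ and $y$, and suppose for contradiction that every Hamiltonian copy of $P$ in $T$ uses $e$; such a copy exists by Havet--Thomass\'e. We split along $b(P)$, the number of blocks of $P$. If $b(P)\geq 3$, the plan is a local re-routing argument. Fix an embedding $\varphi:P\to T$ using $e$ at some position $i$. Since $P$ has at least three blocks, a block boundary of $P$ lies at controlled distance from position $i$ on each side, and one may attempt to swap $x$ or $y$ with a vertex $z\in V(T)\setminus\{x,y\}$ incident to that boundary in $\varphi$, so as to destroy the use of $e$. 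The arcs required for such a swap exist in $T-e$ whenever $n\geq 8$: otherwise the joint in- and out-degree constraints that $\{x,y,z\}$ would have to satisfy on the remaining $n-3$ vertices are mutually incompatible.

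If $b(P)=2$, write $P=v_1\cdots v_kv_{k+1}\cdots v_n$ with the unique interior vertex $v_k$ being a sink (the source case is symmetric, and the two orientations of the first block are treated analogously). Applying Havet--Thomass\'e to suitable subtournaments $T-z$ and to the two directed subpaths $v_1\cdots v_k$ and $v_n\cdots v_{k+1}$, one obtains, for each admissible pair $(a,b)\in E(T)$ where $a$ and $b$ can play the roles of $v_{k+1}$ and $v_k$, a splicing of the two directed subpaths into a Hamiltonian copy of $P$ whose interior peak lies on $\{a,b\}$. Assuming every such splicing uses $e$, every admissible peak pair must coincide with $\{x,y\}$, forcing both $x$ and $y$ to be sinks in $T-e$; a short compatibility check then matches this configuration exactly to special exception (ii).

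If $b(P)=1$, we combine R\'edei's parity theorem with El Zein's theorem. El Zein supplies an arc $e'$ whose removal preserves every oriented Hamiltonian path, and by the contradiction hypothesis $e'$ must be $e$ itself. Analyzing the directed Hamiltonian paths of $T$ through $e$ by applying R\'edei inside $T[N^-(x)]$ and $T[N^+(x)]$ and exploiting the parity statement, one deduces $N^\pm(x)\setminus\{y\}=N^\pm(y)\setminus\{x\}$ together with the forward domination $N^-(x)\setminus\{y\}\to N^+(x)\setminus\{y\}$, which is exception (i). The main obstacle is the two-block case: the two-block pattern of $P$ admits only a very limited repertoire of re-routing moves, and one must verify that when every possible placement of $\{x,y\}$ relative to the unique interior boundary of $P$ obstructs each such move, the resulting constraints align exactly with the precise configuration of exception (ii).
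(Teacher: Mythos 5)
Your ``only if'' direction is fine and matches the paper's (one-line) justification. The ``if'' direction, however, has genuine gaps at every branch, and in one place a logical reversal. For $b(P)\geq 3$ your entire argument rests on the assertion that ``the arcs required for such a swap exist in $T-e$ whenever $n\geq 8$,'' which is precisely the content of the theorem in that case, not a proof of it; a single local swap at a block boundary is not known to succeed in general, and the paper instead builds the path constructively by choosing an initial segment through $x$ (or $y$) and invoking Theorem~\ref{HT} on a residual subtournament, which forces it to confront the $68$ exceptional pairs $(T;P)$ of Havet and Thomass\'e. Your proposal never engages with these exceptions at all, whereas handling them (via Lemma~\ref{lemma1} and Observations~\ref{obs1}--\ref{obs3}, plus ad hoc constructions for $E_1,\dots,E_{10}$, $Exc33$, $Exc39$, etc.) is where most of the actual work in the paper lies. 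The $b(P)=2$ branch has the same problem: ``a short compatibility check then matches this configuration exactly to special exception (ii)'' is the statement to be proved, not an argument.

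The $b(P)=1$ branch contains an error of direction. El Zein's theorem supplies an arc $e'$ such that $T-e'$ contains every oriented Hamiltonian path; under your contradiction hypothesis that $T-e$ fails to contain the directed path $P$, you can only conclude $e'\neq e$, not $e'=e$ as you claim, and knowing $e'\neq e$ tells you nothing about the structure around $e$. The subsequent appeal to R\'edei's parity theorem inside $T[N^-(x)]$ and $T[N^+(x)]$ is not developed into an argument that actually derives the twin condition $N^\pm_D(x)=N^\pm_D(y)$ and the domination $N^-_D(x)\to N^+_D(x)$. For comparison, the paper handles the directed case in a few lines without any parity argument: if some vertex of $N^+_D(x)$ dominates $y$ one concatenates directed Hamiltonian paths of $D[N^-_D(x)]$ and $D[N^+_D(x)\cup\{y\}]$ through $x$; otherwise $y$ dominates $N^+_D(x)$ and is dominated by $N^-_D(x)$, and the non-exceptionality hypothesis yields an arc $(v,u)$ from $N^+_D(x)$ to $N^-_D(x)$ around which the path $LxvuyL'$ is built. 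In short, your proposal outlines a genuinely different (contradiction-and-rerouting) strategy, but none of its load-bearing steps is established, so it does not constitute a proof.
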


We work with simple graphs, that is, graphs without loops or multiple edges. A digraph is a graph in which every edge has an orientation; oriented edges are called arcs. A tournament is an orientation of a complete graph. For a digraph $D$, we denote its vertex set by $V(D)$ and its arc set by $E(D)$, and the order of $D$ is $|V(D)|$. If $(x,y) \in E(D)$, we say that $x$ dominates $y$ and that $y$ is dominated by $x$. We also say that $x$ is an inneighbor of $y$ and $y$ an outneighbor of $x$. The inneighborhood of $x$ in $D$, denoted by $N^-_D(x)$, is the set of inneighbors of $x$ in $D$, and the outneighborhood of $x$ in $D$, denoted by $N^+_D(x)$, is defined likewise. We denote by $d^-_D(x)=|N^-_D(x)|$ and $d^+_D(x)=|N^+_D(x)|$. We say that a vertex $u\in D$ is a source (resp., sink) if $d^-_D(u)=0$ (resp., $d^+_D(u)=0$). For $X,Y\subseteq  V(D)$, we say that $X$ dominates $Y$ if every vertex of $X$ dominates every vertex of $Y$. The dual $\overline{D}$ of $D$ is obtained by reversing all arcs of $D$. If $D$ is a digraph and $X \subseteq V(D)$, we write $D[X]$ for the subgraph induced by $X$, and $D-X$ for $D[V(D)\setminus X]$. For a single vertex $u \in V(D)$, we write $D_u = D-u$.

An oriented path, which we will simply call a path, is a sequence of distinct vertices $P = v_1 \cdots v_n$ such that each consecutive pair forms an arc. The vertex $v_1$ is the origin and $v_n$ the end of $P$. The length of $P$ is $n-1$, and its order is $n$. We denote by $V(P)=\{v_1,\dots,v_n\}$. The reversed path is denoted $P^{-1} = v_n \cdots v_1$, and the path obtained by removing the origin is denoted $^{*}P = v_2 \cdots v_n$. For clarity, we write $^*P^{-1}$ to mean $^*(P^{-1})=v_{n-1}\cdots v_1$. A path is an outpath (resp., inpath) if its origin dominates (resp., is dominated by) its second vertex. It is directed if all arcs are consistently oriented. A block of a path $P$ is a maximal directed subpath of $P$. In general, an oriented path is formed by successive blocks. The $i^{th}$ block of $P$ is denoted by $B_i(P)$ while its length is denoted by $b_i(P)$. The type of an outpath $P$ is the sequence $+(b_1(P),b_2(P),\dots,b_s(P))$ while that of an inpath $Q$ is $-(b_1(Q),b_2(Q),\dots,b_t(Q))$. Occasionally, we write $P=\pm (b_1(P),\dots,b_s(P))$ instead of $P$ is of type $\pm (b_1(P),\dots,b_s(P))$. When two paths $P$ and $Q$ are of same type, we write $P\equiv Q$. Given a path $P$ and a digraph $D$, we denote by $O_D(P)$ the set of origins of paths $Q \equiv P$ in $D$. A path is Hamiltonian if it passes through all vertices of the digraph.

A digraph $D$ is strong if for every pair of vertices $x,y \in V(D)$ there exists a directed outpath with origin $x$ and end $y$. A strong component of a digraph is a strong subdigraph that is maximal with respect to the inclusion. For $X \subseteq V(D)$, the outsection $S^+(X)$ (resp., insection $S^-(X)$) is the set of vertices $y$, that is the end of a a directed outpath (resp., inpath) with origin in $X$.  We simply write $S^+(x_1,\dots,x_t)$ instead of $S^+(\{x_1,\dots,x_t\})$. We also write $s^+(X) = |S^+(X)|$ and $s^-(X) = |S^-(X)|$. A vertex $x$ is an outgenerator (resp., ingenerator) of $D$ if $S^+(x) = V(D)$ (resp., $S^-(x) = V(D)$).

\section{Proof of the Main Theorem}
 
Prior to establishing Rosenfeld’s conjecture, Thomason \cite{Thomason} proved that every tournament $T$ on \(n+1\) vertices contains any oriented path $P$ of order \(n\). In particular, any subset of \(b_1(P)+1\) vertices of $T$ must include an origin of $P$. Havet and Thomass\'{e} \cite{Hav} improved this result by demonstrating a stronger property: for any pair of vertices \(x,y\in T\), at least one is an origin of \(P\) whenever 
$s^{+}(x,y) \geq b_1(P)+1$
in the case where \(P\) is an outpath, and 
$s^{-}(x,y) \geq b_1(P)+1$
otherwise. They aimed to establish the same result for Hamiltonian paths.
However, they identified several tournaments that fail to meet the hypothesis. Consequently, they defined a pair \((T;P)\) to be an \emph{exception} if the tournament \(T\) admits two vertices \(x, y\) with 
$s^{+}(x,y) \geq b_1(P)+1$ 
while neither vertex serves as an origin of the outpath \(P\). By duality, one may equivalently consider the insection induced by \(x\) and \(y\) in place of the outsection in case of an inpath $P$. Despite the presence of \(68\) explicitly characterized exceptions, described in Appendix \ref{appendix:A}, Havet and Thomass\'{e} proved the following theorem.

\begin{theorem}\label{HT} \cite{Hav}
Let $T$ be a tournament of order $n$ and $P$ be an outpath of order $n$. If $x$ and $y$ are two vertices of $T$ such that $s^+(x,y)\geq b_1(P)+1$, then one of the following holds:\begin{itemize}
\item[(i)] $x$ or $y$ is an origin of $P$ in $T$.
\item[(ii)] $(T;P)$ is an exception.
\end{itemize}
\end{theorem}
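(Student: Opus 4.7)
The plan is to proceed by induction on $n$, with the outpath $P$ and the pair $(x,y)$ as auxiliary parameters. The base cases, covering tournaments of small order, are handled by a direct (and delicate) examination of all outpaths $P$ and all pairs $(x,y)$ with $s^+(x,y)\geq b_1(P)+1$; this is precisely where the $68$ exceptional tournament-path pairs listed in Appendix~\ref{appendix:A} emerge.

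For the induction step, suppose $(T;P)$ is not an exception, $s^+(x,y)\geq b_1(P)+1$, and, toward a contradiction, neither $x$ nor $y$ is an origin of $P$. The main reduction is to remove a carefully chosen vertex $v$ of $T$ and apply the induction hypothesis either to $(T-v;\,{}^{*}P)$ with the pair $(x,y)$ preserved, or to $(T-v;\,P)$ with a newly chosen pair $(x',y')$ inside $T-v$. To employ the first reduction, one looks for $v$ such that $s^+_{T-v}(x,y)\geq b_1({}^{*}P)+1$, so the hypothesis still holds for the shorter outpath ${}^{*}P$; if induction yields an origin $w\in\{x,y\}$ of ${}^{*}P$ in $T-v$, then one tries to extend $w$ to an origin of $P$ in $T$ by attaching a suitable inneighbor of $w$ to play the role of $v_1$ --- ideally $v$ itself, or some vertex not already used by the subpath.

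The core of the argument is a case split on $b_1(P)$ and on the local structure of $T$ around $x$ and $y$. When $b_1(P)=1$, the first block is a single arc, so ${}^{*}P$ is an inpath; here one invokes the dual version of the statement on $T-v$, requiring control of $s^-_{T-v}(x,y)$. When $b_1(P)\geq 2$, ${}^{*}P$ is still an outpath and the direct reduction is available. In both regimes, the obstructions to closing the induction are exactly the configurations in which no acceptable $v$ exists, forcing strong constraints on $N^+(x)\cap N^+(y)$, $N^-(x)\cap N^-(y)$, and the dominance pattern between these sets. A complementary path-swapping argument --- take an origin $z\in S^+(x,y)\setminus\{x,y\}$ guaranteed by Thomason's bound, fix $Q\equiv P$ with origin $z$, and try to exchange $z$ with $x$ or $y$ along a short directed subpath --- further restricts the possibilities, leaving only the listed exceptions.

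The main obstacle is managing this case analysis so that the exceptional tournaments emerge cleanly rather than being scattered across ad hoc sub-arguments. In particular, showing that no exception beyond the $68$ tabulated ones can arise during the inductive step --- i.e., that the exception list is truly closed under the reductions above --- is the most delicate part of the proof, and is where the bulk of the technical work lies.
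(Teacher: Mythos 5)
This statement is Havet and Thomass\'{e}'s theorem, which the paper imports from \cite{Hav} and does not prove; there is therefore no in-paper proof to compare against, and your attempt has to be measured against the original argument. Your outline does capture the broad shape of that argument: the proof in \cite{Hav} is indeed an induction on $n$ in which one deletes a well-chosen vertex, passes from $P$ to $^{*}P$ (or dualizes when $b_1(P)=1$, so that $^{*}P$ becomes an inpath and the insection version of the hypothesis is needed), re-attaches the deleted vertex as the new origin, and records as ``exceptions'' exactly those configurations where every such reduction fails.

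However, what you have written is a plan, not a proof. Every substantive step is named and then deferred: you do not specify how the vertex $v$ is chosen, you do not show that when no acceptable $v$ exists the structure of $T$ is forced into one of the tabulated configurations, and you do not carry out the closure argument for the exception list. That last point is not a finishing detail but the entire content of the theorem --- and note that it cannot reduce to ``a direct examination of small orders,'' because the list contains the infinite families $E_1(n),\dots,E_{14}(n)$, which must be shown to propagate through (and be the only things that propagate through) the inductive step for all $n$. Your own closing paragraph concedes that this is ``where the bulk of the technical work lies''; since none of that work is done, the proposal does not establish the statement. If you intend to use this theorem, the honest course is to cite \cite{Hav} as the paper does rather than to sketch its proof.
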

Based on this, they proved that for a tournament $T$ of order $n$ and a path $P$ of order $n$, unless $(T;P)$ is one of Gr\"{u}nbaum's exceptions, $T$ contains $P$. This implies the tight bound of $N=8$ for Rosenfeld's conjecture.
The following corollary to Theorem \ref{HT} is immediate, and will be used throughout our proof.

\begin{corollary}\label{corollary1}
Let $T$ be a tournament of order $n$ and $P$ be a path of order $n$ where $(T;P)$ is not an exception. If $T$ is strong or $b_1(P)=1$, then for any two vertices of $T$, one of them is an origin of $P$.

\end{corollary}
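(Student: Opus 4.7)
The plan is to derive the corollary as a direct consequence of Theorem \ref{HT}, by verifying that its hypothesis is automatically satisfied in both cases. Without loss of generality, I would assume that $P$ is an outpath; the case where $P$ is an inpath follows by applying duality to $\overline{T}$ and $P^{-1}$, using the insection variant of Theorem \ref{HT} mentioned in the paragraph before the theorem.

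Fix arbitrary vertices $x,y\in V(T)$. The goal is to show that $s^+(x,y)\geq b_1(P)+1$, so that Theorem \ref{HT} forces $x$ or $y$ to be an origin of $P$ (since $(T;P)$ is not an exception by assumption). I would split into the two hypotheses of the corollary.

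If $T$ is strong, then for every vertex $v$ there is a directed outpath from $v$ to every other vertex of $T$, hence $S^+(v)=V(T)$ and in particular $S^+(x,y)=V(T)$. Thus $s^+(x,y)=n$, and since $P$ has order $n$ we have $b_1(P)\leq n-1$, so the hypothesis $s^+(x,y)\geq b_1(P)+1$ holds. If instead $b_1(P)=1$, then the required bound reduces to $s^+(x,y)\geq 2$, which is immediate from the inclusion $\{x,y\}\subseteq S^+(x,y)$.

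In either case, Theorem \ref{HT} applies and yields that one of $x,y$ is an origin of $P$. There is no serious obstacle here: the content of the corollary is entirely bookkeeping, confirming that the two situations singled out are precisely the regimes in which the outsection/insection hypothesis of Theorem \ref{HT} is automatic for every pair of vertices.
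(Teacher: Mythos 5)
Your proof is correct and is exactly the derivation the paper intends: the paper presents this corollary as an immediate consequence of Theorem \ref{HT}, and your two case checks (strongness gives $s^+(x,y)=n\geq b_1(P)+1$; $b_1(P)=1$ gives $s^+(x,y)\geq|\{x,y\}|=2$) are the whole content. Nothing is missing.
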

El Zein \cite{Ez} proved the following corollary, which we will also employ.
\begin{corollary}\label{corollary2}
 Let $T$ be a tournament of order $n$ and $P$ be an outpath (resp. inpath) of order $n$. If ($T; P)$ is not an exception, then $|O_T(P)|\geq n-b_1(P)$. In particular, if $P$ is not directed, then $|O_T(P)|\geq 2$.
\end{corollary}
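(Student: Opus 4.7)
The plan is to argue by contradiction and apply Theorem~\ref{HT} directly. I would assume $(T;P)$ is not an exception but $|O_T(P)| < n - b_1(P)$, so that the set $\bar{O}:=V(T)\setminus O_T(P)$ of non-origins satisfies $|\bar{O}| \geq b_1(P)+1 \geq 2$. My goal would then be to exhibit two distinct vertices $u,v \in \bar{O}$ with $s^+(u,v) \geq b_1(P)+1$, since such a pair would contradict Theorem~\ref{HT}.

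To produce such a pair, I would exploit the condensation of $T$. Writing the strong components as $C_1,\dots,C_k$, linearly ordered so that $C_i$ dominates $C_j$ whenever $i<j$, I would let $\ell$ be the smallest index for which $C_\ell \cap \bar{O}$ is nonempty and pick any $v \in C_\ell \cap \bar{O}$. The strongness of $C_\ell$, together with the fact that $C_\ell$ dominates every later component, yields $S^+(v) = C_\ell \cup \cdots \cup C_k$; the minimality of $\ell$ then forces $\bar{O} \subseteq S^+(v)$. Choosing any $u \in \bar{O}\setminus\{v\}$, available because $|\bar{O}| \geq 2$, one obtains
\[
s^+(u,v) \geq |S^+(v)| \geq |\bar{O}| \geq b_1(P)+1,
\]
and Theorem~\ref{HT} then forces $u$ or $v$ to lie in $O_T(P)$, a contradiction. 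The ``in particular'' clause is immediate: a non-directed outpath has at least two blocks, so $b_1(P) \leq n-2$ and hence $|O_T(P)| \geq 2$.

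The subtle point, as I see it, is guaranteeing that the second non-origin $u$ can be selected \emph{distinct} from $v$. This is exactly what forces the contradiction hypothesis to be stated at the level $|\bar{O}| \geq b_1(P)+1$ rather than $\geq b_1(P)$, leaning essentially on $b_1(P) \geq 1$. Once this bookkeeping is in place, the whole argument collapses to a single clean invocation of Theorem~\ref{HT} against the strong-component ordering, with no additional case analysis on the structure of $T$ or on whether $\bar{O}$ straddles multiple components.
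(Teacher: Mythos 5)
Your argument is correct: from $|\overline{O}|\geq b_1(P)+1\geq 2$, picking a non-origin $v$ in the earliest strong component meeting $\overline{O}$ gives $\overline{O}\subseteq S^+(v)$, so any second non-origin $u$ yields $s^+(u,v)\geq b_1(P)+1$ and Theorem~\ref{HT} supplies the contradiction; the dual argument with $s^-$ handles inpaths. Note that the paper itself gives no proof of this corollary (it is quoted from El Zein \cite{Ez}), but your derivation is the natural one from Theorem~\ref{HT} and is sound as written.
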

Furthermore, by counting the path origins appearing in the exceptions listed in Appendix \ref{appendix:A}, we establish a lower bound on the number of origins of $P$ that occur in the majority of these exceptions.
\begin{observation}\label{obs1}
Suppose that $(T;P)$ is an exception different from those of Gr\"{u}nbaum and $Exc1,\allowbreak Exc7, \allowbreak E_{1}(4)$ or their duals. Then, the following hold.
\begin{enumerate}[label=(\roman*)]
     \item $|O_T(P)|\geq 2$.
     \item If $|T|\geq 6$ and $(T;P)$ is not one of $Exc15$, $Exc24$ or their duals, then $|O_T(P)|\geq 3$.
     \item If $|T|\geq 7$, then $|O_T(P)|\geq 4 $.
 \end{enumerate}
\end{observation}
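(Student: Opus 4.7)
The plan is to establish all three claims by direct case analysis through the $68$ exceptions displayed in Appendix \ref{appendix:A}. For each such exception $(T;P)$, both the tournament $T$ and the path $P$ are given explicitly, so $O_T(P)$ can be determined vertex-by-vertex: for each $v\in V(T)$, one asks whether there is a path of the same type as $P$ starting at $v$, and this is a finite check once $T$ and the block sequence of $P$ are fixed.

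Before entering the cases I would exploit the duality $|O_T(P)|=|O_{\overline{T}}(P^{-1})|$, which roughly halves the amount of verification since the three claims and the excluded families are themselves closed under duality. I would also use any visible automorphism of the exceptional tournament that preserves the type of $P$: once one origin is identified, its orbit under such an automorphism yields further origins essentially for free, and most of the tournaments in the appendix carry a nontrivial such symmetry.

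For part (i), for every exception outside Grünbaum's, $Exc1$, $Exc7$, $E_1(4)$ and their duals I would exhibit two distinct origins. The excluded cases are precisely the ones in which the origin set is smallest, so for each remaining exception a second origin can be produced either by symmetry or by short direct verification from the adjacency data. For parts (ii) and (iii) the same strategy is used with more origins required; the size restrictions $|T|\geq 6$ and $|T|\geq 7$, together with the extra exclusions of $Exc15$, $Exc24$ and their duals in (ii), are calibrated so that the remaining exceptions have either enough vertices or enough symmetry to yield three (resp. four) origins. Corollary \ref{corollary2} can serve as a sanity check on the counts in the larger cases, though it cannot be applied directly since the pair $(T;P)$ is exceptional by assumption.

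The main obstacle is the volume of verification rather than any conceptual difficulty: one must keep track of roughly $30$--$40$ exceptions not covered by duality, and for each of them check the applicable bound. I would organize the work as a single table indexed by exception, with columns recording $|V(T)|$, the explicit origins found, and $|O_T(P)|$, and the observation then reduces to the statement that no row of this table falls short of its prescribed lower bound.
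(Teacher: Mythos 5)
Your proposal is essentially the paper's own argument: the observation is justified there purely ``by counting the path origins appearing in the exceptions listed in Appendix~\ref{appendix:A}'', i.e., a finite case-by-case check over the listed exceptions (using duality to avoid repeating work), exactly as you describe. The only simplification you overlook is that each appendix entry $[T;P;S;P_1,\dots,P_k]$ already records the set $S$ of non-origins together with an explicit witness path for every origin, so $|O_T(P)|=|V(T)|-|S|$ can simply be read off each entry rather than re-derived from the adjacency data.
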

We also notice that for most exceptions $(T;P)$, the tournament $T$ is strong. For clarity, we write $E_i$ instead of $E_i(n)$ which is the notation used in Appendix \ref{appendix:A}.
\begin{observation}\label{obs2}
If $(T;P)$ is an exception where $T$ is not strong, then $(T;P)$ is one of the exceptions $E_i,E_j'$, $i\in \{1,2,3,4,5,6,8,9,10\}$, $j\in \{8,9,10\}$ or their duals.
\end{observation}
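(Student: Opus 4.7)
The plan is to prove Observation \ref{obs2} by direct inspection of the list of exceptions provided in Appendix \ref{appendix:A}, since the set of exceptions is explicitly characterized and finite (the $68$ exceptions plus Gr\"{u}nbaum's). For each exception $(T;P)$ in the list, I would examine the structural description of $T$ and check whether $T$ admits a nontrivial dominant set, i.e., a proper nonempty subset $X \subsetneq V(T)$ that dominates $V(T)\setminus X$; equivalently, whether $T$ fails to be strong.

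First, I would handle the families $E_i$ and $E_j'$ that the statement does list. For each of $i\in\{1,2,3,4,5,6,8,9,10\}$ and $j\in\{8,9,10\}$, I would exhibit explicitly a partition $V(T)=A\cup B$ of the vertex set of the corresponding tournament $T$ such that $A$ dominates $B$, thereby confirming that these tournaments, and hence their duals, are indeed not strong. This part is mechanical, following directly from the defining descriptions in Appendix \ref{appendix:A}.

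Next, I would treat the remaining exceptions. Grouping them by family is helpful: for Gr\"{u}nbaum's exceptions (the directed triangle, the regular tournament of order $5$, and the Paley tournament of order $7$) strong connectivity is immediate as these are well known to be vertex-transitive and strong. For the sporadic exceptions $Exc1,\ldots,Exc24$ and their duals, as well as $E_7$ and the remaining $E_j'$ not listed, I would verify strong connectivity by producing, for any pair of vertices in the tournament, a directed path in each direction; in practice, exhibiting a Hamiltonian directed cycle (or a dominant cycle) in each suffices and can be read off directly from the adjacency descriptions in Appendix \ref{appendix:A}. By duality it is enough to consider one exception from each dual pair.

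The main obstacle is not conceptual but rather the bookkeeping: there are many exceptions to traverse, and the descriptions in Appendix \ref{appendix:A} encode their arc-sets in different formats (some by recursive construction, some by explicit adjacency tables). The key to keeping the argument short is to observe that for the families $E_i$ with small index, the construction inductively adds a dominating or dominated vertex, which is precisely the mechanism that destroys strong connectivity; identifying this recursive feature allows most of the non-strong cases to be handled uniformly, while the remaining exceptions admit a visible Hamiltonian cycle and are dispatched at once.
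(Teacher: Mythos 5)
Your approach — exhaustive inspection of the finitely many exception families in Appendix \ref{appendix:A} — is exactly what the paper does (the observation is stated without further proof, as a fact read off from the appendix), and it is the only viable approach. However, your case enumeration is incomplete, which is the one way an inspection proof can fail. The "remaining exceptions" you list to be certified strong are Gr\"{u}nbaum's, $Exc1,\dots,Exc24$ and their duals, $E_7$, and "the remaining $E_j'$ not listed" (of which there are none). This omits $Exc0$ and $Exc25$ through $Exc51$ (the sporadic exceptions on the tournaments $6H$ through $8B$), and, more importantly, the four infinite families $E_{11}(n),E_{12}(n),E_{13}(n),E_{14}(n)$. The latter cannot be dispatched by "reading off a Hamiltonian cycle" from a finite adjacency table: each requires a short argument valid for all $n$, namely that $F_{11},\dots,F_{14}$ each decompose into three parts arranged cyclically (e.g., in $F_{11}(n)$ one has $X\cup\{1\}\rightarrow 2\rightarrow 3\rightarrow X\cup\{1\}$ together with $X\rightarrow 1$), from which strongness follows. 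Without covering these families the observation is not established.

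Two smaller remarks. First, the observation is only an implication (not strong $\Rightarrow$ in the list), so your first step — exhibiting a dominated set in each listed family $E_i,E_j'$ — is not needed; moreover for $E_3(n)$ and $E_4(n)$ the non-strongness is not a clean "one side dominates the other" partition and depends on the internal structure of $T(X)$, so insisting on proving it would cost you effort the statement does not require. Second, your heuristic that "the construction inductively adds a dominating or dominated vertex" is the right intuition for why $E_1,\dots,E_6,E_8,E_9,E_{10}$ and the primed families fail to be strong, but it is precisely the feature that is \emph{absent} in $E_7$ and $E_{11}$--$E_{14}$, which is why those must be checked separately and kept out of the list.
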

Let $T$ be a tournament and $x,y\in V(T)$. We first consider the case where $T_x=T-x$ is an exception and $d^+_D(x)\geq 2$. This is done by noticing key properties of the exceptions in Appendix \ref{appendix:A}, that reduce the number of verifications needed. 
\begin{lemma}\label{lemma1}
 Let $T$ be a tournament of order $n\geq 8$, $P$ be an outpath of order $n$, $x,y\in V(T)$, and 
 $D$ the digraph obtained from $T$ by deleting the arc joining $x$ and $y$. If $d^+_D(x)\geq 2$ and $(T_x;^*P)$ is an exception, then $D$ contains $P$.
\end{lemma}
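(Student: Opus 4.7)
The strategy is to show that $x$ is itself an origin of a copy of $P$ in $D$. Because the arc between $x$ and $y$ has been deleted to form $D$, we have $D-x = T-x = T_x$, so obtaining a copy of $P$ in $D$ with origin $x$ amounts to finding an outneighbor $u \in N^+_D(x)$ that is an origin of ${}^{*}P$ in $T_x$; prepending $x$ to an occurrence of ${}^{*}P$ starting at such $u$ yields a copy of $P$ in $D$. Writing $O := O_{T_x}({}^{*}P)$, the task reduces to
\[
N^+_D(x) \cap O \neq \emptyset.
\]

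The plan is to argue by contradiction, assuming $N^+_D(x) \cap O = \emptyset$. Since $N^+_D(x) \supseteq N^+_T(x)\setminus\{y\}$, every element of $O$ then lies in $N^-_T(x)\cup\{y\}$, so $O\setminus\{y\} \subseteq N^-_T(x)$. From $|T_x| = n - 1 \geq 7$ and Observation \ref{obs1}(iii), we obtain $|O| \geq 4$ whenever $(T_x;{}^{*}P)$ is not one of Gr\"unbaum's exceptions, $Exc1$, $Exc7$, $E_1(4)$, or a dual; in this generic regime at least three vertices of $O$ are inneighbors of $x$ in $T$. The plan is then to open Appendix \ref{appendix:A} and, case by case, use the explicit description of each surviving exception (vertex set, dominance pattern, location of the origins inside $T_x$) to contradict the fact that $x$ still has at least two outneighbors in $N^+_D(x)$. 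Observation \ref{obs2} compresses this list substantially: if $T_x$ is not strong, the exception must be of the form $E_i$ for $i\in\{1,\dots,10\}$ or $E_j'$ for $j\in\{8,9,10\}$ (or a dual), and in each such family the origins are concentrated in a prescribed strong component whose adjacencies with $x$ are tightly pinned down by $d^+_D(x) \geq 2$.

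The leftover cases are the exceptions that Observation \ref{obs1} excludes. Exceptions $Exc1$, $Exc7$, and $E_1(4)$ all have fixed small orders; whenever that order is below $7$, they are ruled out by $|T_x| \geq 7$, and their duals are handled symmetrically. Gr\"unbaum's exceptions occur only on tournaments of orders $3$, $5$, and $7$, so among them only the Paley tournament of order $7$ survives, forcing $n = 8$; in that single explicit configuration ${}^{*}P$ is the antidirected Hamiltonian path and a direct enumeration of its origins suffices to show that any pair of outneighbors of $x$ must meet $O$. The main obstacle is keeping the case analysis from Appendix \ref{appendix:A} tractable; the role of Observations \ref{obs1} and \ref{obs2} in this plan is precisely to cut the enumeration down to a handful of explicit configurations in which the position of $O$ relative to $N^+_D(x)$ can be read off directly from the structural data of the exception.
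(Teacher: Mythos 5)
There is a genuine gap, and it lies at the very center of your plan. You propose to derive a contradiction from the assumption $N^+_D(x)\cap O_{T_x}({}^{*}P)=\emptyset$, but this situation is not contradictory — it is realizable, and it is precisely the hard case the lemma exists to handle. Knowing (via Observation \ref{obs1}) that $|O_{T_x}({}^{*}P)|\geq 4$ and that all origins lie in $N^-_D(x)\cup\{y\}$ conflicts with nothing: $x$ may perfectly well have two outneighbors and $n-4$ inneighbors, with every origin of ${}^{*}P$ among the inneighbors. Concretely, in exception $E_8(n)$ the set of non-origins is all of $X$ with $|X|\geq 2$ and the origins are the three vertices of the dominating $3$-cycle, so nothing prevents $N^+_D(x)\subseteq X$; and in $Exc33$ the set $S$ is the entire vertex set of the Paley tournament $7A$, so $O_{T_x}({}^{*}P)=\emptyset$ outright — which also falsifies your claim that in that configuration "any pair of outneighbors of $x$ must meet $O$." In these cases the conclusion "$D$ contains $P$" must be reached without $x$ serving as the origin of a copy of $P$ built by prepending $x$ to ${}^{*}P$.

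What the argument actually requires, after reducing to $N^+_D(x)\subseteq S$, is a different embedding of $P$. The paper's route is to observe (their Observation \ref{obs3}) that for almost all exceptions $(T_x;{}^{*}P)$, the reversed configuration $(T_x;{}^{*}P^{-1})$ is \emph{not} an exception and $T_x$ is strong or $b_1({}^{*}P^{-1})=1$; then Corollary \ref{corollary1} (or Theorem \ref{HT}) produces an origin of ${}^{*}P^{-1}$ among two suitable neighbors of $x$, giving $xQ\equiv P^{-1}$ and hence $P$ in $D$. The dozen exceptions for which this fails ($Exc33$ and its dual, duals of $Exc37$ and $Exc49$, $Exc39$, $E_8'$ and its dual, $E_{12}(7)$, and $E_i$, $i\in\{1,\dots,6\}$, with duals) are each handled by an explicit construction in which $P$ is rooted elsewhere, e.g.\ $uxQ\equiv P$ with $u\in N^-_D(x)$ or $v_1v_2xQ\equiv P$. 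Your plan contains no mechanism for producing $P$ once the prepend-$x$ strategy fails, so the proof cannot be completed along the lines you describe.
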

\begin{proof}
Let $P=v_1\cdots v_n$. As in Appendix \ref{appendix:A}, let $S$ be the set of
non-origins of $^*P$ in $T_x$. If there exists some $v$ in $N^+_D(x)\setminus S$, then $v$ is the origin of a path $Q\equiv ^*P$ in $T_x$, and $xQ\equiv P$. Thus, we can assume that $N^+_D(x)\subset S$. Consider the following observation.
\begin{observation}\label{obs3}
Suppose $(T_x;^*P)$ is an exception with $(T_x,^*P)\notin \{Exc33, DualExc33, \allowbreak  Dual Exc37, Exc39, Dual Exc49, E'_8,Dual E_8',E_{12}(7),E_i,DualE_i\}$ for\\$i\in\{1,2,3,4,5,6\}$. Then, $(T_x;^*P)$ satisfies the following conditions:  \begin{enumerate}[label=(\roman*)]
     \item $(T_x;^*P^{-1})$ is not an exception where $^*P^{-1}=v_{n-1}\cdots v_1$.
     \item $|T_x\setminus S|\geq 2$
     \item Either $T_x$ is strong or $b_1(^*P^{-1})=1$. 
 \end{enumerate}
\end{observation}
\noindent Therefore, $d^-_D(x)\geq 2$. If $P^{-1}$ is an outpath (resp., inpath), take $z$ and $w$ to be two vertices of $N^+_D(x)$ (resp., $N^-_D(x)$). Then, by Corollary \ref{corollary1}, $z$ or $w$ is an origin of $^*P^{-1}$ in $T_x$. Therefore, we have $xQ\equiv P^{-1}$. This shows that $D$ contains $P^{-1}$ so it contains $P$. Now we consider the exceptions that do not satisfy Observation \ref{obs3}.\\
If $(T_x;^*P^{-1})$ is $Exc33$, we also have that $(T_x;^*P^{-1})$ is not an exception, and $T_x$ is strong. Moreover, $P^{-1}$ is an outpath and $d^+_D(x)\geq 2$. Therefore, as in the above $D$ contains $P^{-1}$ so it contains $P$.\\
If $(T_x,^*P^{-1})$ is the dual of $Exc33$, let $u,v \in N^+_D(x)$.
Assume wlog that $u=1$. If $v=3$, then $61x27435\equiv P$. The other choices for $v$ can be handled similarly.\\
If $(T_x;^*P^{-1})$ is $Exc39$, suppose that $y=7$. Then, $(T_y;^*P)$ is not an exception, and $d^+_D(y)\geq 3\geq b_1(^*P)+1$. Thus, by Theorem \ref{HT}, an outneighbor of $y$ is the origin of a path $Q\equiv ^*P$ in $T_y$. Therefore, $yQ\equiv P$. Now if $y\neq 7$, let $u=7$ and $A=T_x-u$. We have $v,w\in N^+_D(x)$ with $s^+_{A}(v,w)\geq 3=b_{1}(^*P)+1$. Thus, either $v$ or $w$ is an origin of $Q\equiv ^*P$ in $A$. Then, $uxQ\equiv P$.\\
If $(T_x;^*P)$   is one of the exceptions $DualExc37$ or $DualExc49$, we have $d^-_D(x)\geq 2$. Moreover, there exists some $u\in T-\{x,y\}$ where $T-\{x,u\}$ is strong, and $(T-\{x,u\}, ^{**}P)$ is not an exception. Simply take $u\in \{1,2\}-y$ in the case of the dual of $Exc 37$ and take $u\in \{4,5\}-y$ otherwise.
Therefore, by Corollary \ref{corollary1} $T-\{x,u\}$ contains a path $Q\equiv ^{**}P$ of origin $v$ in $N^-_D(x)$, and $uxQ\equiv P$.\\
If $(T_x;^*P)$ is exception $E_8'(n)$, $P^{-1}=-(1,2,|T_{x}|-3)$. Take $u,v\in 3A-y$. Then, $s^+(u,v)\geq 3$, and thus $u$ or $v$ is an origin of $Q\equiv ^{*}P$ in $T_x$. Therefore, $xQ\equiv P^{-1}$.\\
If $(T_x;^*P)$ is the dual of exception $E_8'(n)$, also take $u,v\in 3A-y$. We have that $s^-_{T_x}(u,v)\geq |T_x|$ and thus one of them is the origin of a path $Q\equiv P^{-1}$ in $T_x$. We have $xQ\equiv P$.\\
If $(T_x;^*P)$ is exception $E_{12}(7)$, $P=+(3,1,3)$. Let $u\in \{2,3\}-y$, then we have $u\in N^-_D(x)$. Moreover, $(T_x-u;^{**}P)$ is not an exception and $b_{1}(^{**}P)=1$. Therefore, by Corollary \ref{corollary1}, $T_x-u$ contains a path $Q\equiv ^{**}P$ with origin in $N^+_D(x)$. Thus, $uxQ\equiv P$.\\
Suppose that $(T_x;^*P)$ is one of exceptions $E_i$, $i\in \{1,2,3,4,5,6\}$.
we have $2\leq b_1(P)\leq 3$ and $P$ has two blocks. If $b_1(P)=2$, let $v\in T_x-S\cup\{y\}$, and observe that an outneighbor of $x$ is an origin of an inpath $Q$ in $T_x-v$, and we have $vxQ\equiv P$. Likewise, if $b_1(P)=3$, let $v_1,v_2\in T_x-S\cup\{y\}$ with $v_2\in N^+(v_1)$. Let $Q$ be an inpath in $T_x-\{v_1,v_2\}$ of origin in $ N^+_D(x)$, and we have $v_1v_2xQ\equiv P$.\\
Finally, suppose that $(T_x;^*P)$ is one of the duals of exceptions $E_i$, $i\in \{1,2,3,4,5,6\}$. We have that $P^{-1}$ is an inpath. Moreover, $d^+_D(x)\leq 2$ for all these exceptions other than $E_1$. Then, note that there exists exists $z,w\in N^-_D(x)$ with $s^-_{T_x}(z,w)\geq b_1(P^{-1})+1$ for all exceptions $E_i$, $i\in \{1,2,3,4,5,6\}$. Thus, by Theorem \ref{HT}, $z$ or $w$ is the origin of a path $Q\equiv ^{*}P^{-1}$ in $T_x$. Then, $xQ\equiv P^{-1}$ as desired.
\end{proof}
Note that Lemma \ref{lemma1} also applies to an inpath $P$ but with $d^-_D(x)\geq 2$. We now precisely define a \textit{special exception} in the case of an outpath $P$. The corresponding definition for an inpath is obtained analogously by taking the duals of the digraphs.
\begin{definition}
Let $T$ be a tournament of order $n$, $P$ be an outpath of order $n$, $x,y\in V(T)$, and 
 $D$ the digraph obtained from $T$ by deleting the arc joining $x$ and $y$.  We call $(T,P,\{x,y\})$ a \emph{special exception} if one of the following holds:
 \begin{enumerate}[label=(\roman*)]
     \item $P$ is directed, $N^-_{D}(x)=N^-_{D}(y)$, $N^+_{D}(x)=N^+_{D}(y)$ and $N^-_{D}(x)$ dominates $N^+_{D}(x)$. Note that $N^-_{D}(x)$ and $N^+_{D}(x)$ may be empty.
     \item $P$ has exactly two blocks, and
     $x$ and $y$ are sinks in $D$.
 \end{enumerate}
\end{definition}
We will now prove the main theorem, which we restate below.
\begin{theorem}\label{thm:main}
 Let $T$ be a tournament of order $n\geq 8$, $P$ be a path of order $n$, $x,y\in V(T)$, and 
 $D$ the digraph obtained from $T$ by deleting the arc joining $x$ and $y$. Then, $D$ contains $P$ if and only if $(T,P,\{x,y\})$ is not a special exception.
\end{theorem}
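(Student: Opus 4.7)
The forward direction is a direct structural check. For special exception (i), any embedding of the directed Hamiltonian path $P$ into $D$ places $x$ and $y$ at positions $i<j$, and the subpath $v_{i+1}\cdots v_{j-1}$ is a directed path of $D$ lying in $V(D)\setminus\{x,y\}=A\cup B$, where $A=N^-_D(x)=N^-_D(y)$ and $B=N^+_D(x)=N^+_D(y)$; it starts in $B$ and ends in $A$, but $A$ dominates $B$ in $D$, so no arc $B\to A$ is available to complete the transition. For special exception (ii), the outpath $+(b_1,b_2)$ contains exactly one vertex of out-degree $0$ in $P$ (namely $v_{b_1+1}$), while any embedding would require both $x$ and $y$ (sinks of $D$) to map to vertices of out-degree $0$ in $P$, a contradiction. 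For the converse, duality lets us assume $P$ is an outpath; set ${}^{*}P=v_2\cdots v_n$ and aim to embed $P$ into $D$.

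The core plan is to find a copy $Q\equiv {}^{*}P$ inside $T_x$ whose origin $v$ lies in $N^+_D(x)$, so that prepending $x$ yields $xQ\equiv P$ inside $D$. When this direct attempt fails, three symmetric alternatives are invoked: swap the roles of $x$ and $y$; work at the end of $P$ rather than at its origin (embed $v_1\cdots v_{n-1}$ in $T_x$ with $v_{n-1}$ in the appropriate neighborhood of $x$, then append $x$); and run the entire argument with $P^{-1}$ in place of $P$. Lemma 1 already resolves the subcase where $(T_x;{}^{*}P)$ is an exception and $d^+_D(x)\geq 2$ (together with the analogous variants obtained from the three alternatives above), so we apply it whenever available for either $x$ or $y$.

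When $(T_x;{}^{*}P)$ is not an exception, Theorem HT takes over: picking any two vertices $u,v\in N^+_D(x)$, one of them is the origin of a copy of ${}^{*}P$ in $T_x$ provided $s^+_{T_x}(u,v)\geq b_1({}^{*}P)+1$ (when ${}^{*}P$ is an outpath) or $s^-_{T_x}(u,v)\geq b_1({}^{*}P)+1$ (when ${}^{*}P$ is an inpath). Corollary 1 ensures this automatically whenever $T_x$ is strong or $b_1({}^{*}P)=1$. The residual difficult configurations are therefore (a) $d^+_D(x)\leq 1$ and $d^+_D(y)\leq 1$, where the needed pair of outneighbors does not exist at either end of the deleted arc; and (b) $T_x$ is non-strong with $b_1({}^{*}P)\geq 2$, where Corollary 1 does not apply directly. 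Case (b) is handled by combining the strong-component decomposition $C_1\to\cdots\to C_k$ of $T_x$ with the origin count from Corollary 2 and Observations 1--2 to force an origin of ${}^{*}P$ into $N^+_D(x)$ except under rigid structural hypotheses on $T$; case (a) is handled by switching to appending and, when $P$ has two blocks, by placing $x$ or $y$ at the unique sink of $P$.

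The main obstacle is to show that when every such extension attempt fails, $(T,P,\{x,y\})$ must be one of the two special exceptions. Each failure imposes structural restrictions on $T$: low-degree conditions on $x$ and $y$, coincidences of their in- and out-neighborhoods in $D$, and dominance relations between these neighborhoods. For $P$ with three or more blocks the combined restrictions are inconsistent, so $D$ always contains $P$ in that regime. For $P$ with exactly two blocks, the surviving restriction is that $x$ and $y$ are both sinks of $D$, yielding special exception (ii); and for directed $P$, the surviving restrictions are $N^-_D(x)=N^-_D(y)$, $N^+_D(x)=N^+_D(y)$, with $N^-_D(x)$ dominating $N^+_D(x)$, yielding special exception (i). Making this collapse rigorous in each subcase, typically by leveraging the explicit exception list of Havet--Thomass\'{e} through Observations 1--2, is where the bulk of the technical work resides.
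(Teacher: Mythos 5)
Your forward direction is correct and in fact more explicit than the paper, which dismisses it as clear: for exception (i) the directed subpath strictly between $x$ and $y$ would have to start in $B=N^+_D(x)$ and end in $A=N^-_D(x)$ inside $A\cup B$ with $A$ dominating $B$, which is impossible (you should also dispose of the degenerate case where $x$ and $y$ occupy consecutive positions, which fails simply because that arc is deleted); for exception (ii) a two-block outpath has a unique sink. The strategy you outline for the converse --- prepend $x$ to a copy of ${}^{*}P$ rooted in $N^+_D(x)$, with fallbacks to appending, to swapping $x$ and $y$, and to $P^{-1}$, invoking Lemma~\ref{lemma1} when $(T_x;{}^{*}P)$ is an exception and Theorem~\ref{HT} otherwise --- is exactly the paper's.

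However, the converse as written is a plan rather than a proof, and you concede as much (``making this collapse rigorous in each subcase \dots is where the bulk of the technical work resides''); that collapse \emph{is} the paper's proof, and none of it is carried out. Concretely missing are: the case $d^+_D(x)=d^+_D(y)=0$ with $P$ having at least three blocks, where one deletes the ends of the first and third blocks, embeds the contracted path in $T-\{x,y\}$, and reinserts $x,y$ as the two created sinks; the directed case, where failure of the direct extension forces $y$ to behave as a twin of $x$ and the non-exception hypothesis supplies an arc $v\to u$ from $N^+_D(x)$ to $N^-_D(x)$ yielding $LxvuyL'$; in the subcase $d^+_D(x)\geq 2$, $b_1(P)\geq 2$, the padding device of a directed outpath on $m=b_1(P)-d^+_D(x)$ in-neighbours of $x$ arranged so that $d^+_D(x)=b_1(P')+1$ for the truncated path $P'$, followed by a check of the non-strong exceptions of Observation~\ref{obs2}; and the whole of the case $b_1(P)=1$, including the separate treatment of $P=+(1,n-3,1)$ and the reduction to $b_2(P)\leq \frac{n-3}{2}$ via $P^{-1}$ and the dual tournament. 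I would also flag that your stated tool for configuration (b), the strong-component decomposition of $T_x$, is not by itself enough to ``force an origin of ${}^{*}P$ into $N^+_D(x)$'': when $T_x$ is not strong and $b_1({}^{*}P)\geq 2$, Theorem~\ref{HT} gives nothing for a pair of out-neighbours confined to a low strong component, which is precisely why the padding construction is needed. Until these cases are actually worked out, the proof is incomplete.
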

\begin{proof}Without loss of generality, suppose that $P$ is an outpath. It is clear that if $(T,P,\{x,y\})$ is a special exception, then $D$ does not contain $P$. For the sufficient condition, suppose that $(T,P,\{x,y\})$ is not a special exception. 
Assume without loss of generality that $d^+_D(x)\geq d^+_D(y)$. Let $P=v_1\cdots v_n$. First, suppose that $d^+_D(x)=0$. Then, $d^+_{D}(y)=0$. Now, since $(T,P,\{x,y\})$ is not a special exception, $P$ has at least three blocks. Let $v_{i}$ and $v_j$ be the ends of the first and third blocks of $P$, respectively. Define a path $P'$ obtained by deleting $v_i$ and $v_j$ from $P$ and adding an arc between $v_{i-1}$ and $v_{i+1}$, together with an arc between $v_{j-1}$ and $ v_{j+1}$ if $j< n$. We can do so such that $P'$ is not antidirected. Then, $T-\{x,y\} $ contains a path $ u_{1}\cdots u_{i-1}u_{i+1}\cdots u_{j-1}u_{j+1}\cdots u_{n}\equiv P'$. Hence, $u_{1}\cdots u_{i-1}xu_{i+1}\cdots u_{j-1}yu_{j+1}\cdots u_{n}\equiv P$. Thus, we can assume that $d^+_D(x)\geq 1$. We will study two cases.\\\\
\textbf{Case 1:} $b_{1}(P)\geq 2$.
    \\ First, suppose that $P$ is directed.  If $y$ is dominated by a vertex in $N^+_{D}(x)$, then there exists a Hamiltonian directed outpath $Q$ of the tournament $D[N^+_D(x)\cup \{y\}]$ with origin different from $y$. Let $Q'$ be a Hamiltonian directed outpath of $D[N^-_D(x)]$.
    Hence, $Q'xQ\equiv P$. Therefore, we can assume that $y$ dominates every vertex of $N^+_{D}(x)$. Similarly, if $N^-_{D}(x)$ is non-empty, every vertex of $N^-_{D}(x)$ dominates $y$. Since $(T,P,\{x,y\})$ is not a special exception, there exist $u\in N^-_{D}(x)$ and $v\in N^+_{D}(x)$ such that $v$ dominates $u$. Let $L$ (resp., $L'$) be a directed outpath formed of the vertices of $N^-_{D}(x)-u$ (resp., $N^+_{D}(x)-v$), these paths are possibly empty. We have $LxvuyL' \equiv P$. Thus, in the following, we can assume that $P$ is not directed, and we consider two subcases.\\
    \\ \textbf{Subcase 1.1:} $d^+_{D}(x)\geq 2$.\\
    If $(T_x;^*P)$ is an exception, then, by Lemma \ref{lemma1}, $D$ contains $P$. So, we may assume that $(T_x;^*P)$ is not an exception.
    If $x$ dominates an origin of a path $Q\equiv ^*P$ in $T_x$ different from $y$, then $xQ\equiv P$. Otherwise, by Theorem \ref{HT}, we can assume that $d^+_D(x)\leq  b_1(^*P)<b_{1}(P)$. Moreover, $T_x$ is not strong.\\
    Let $m=b_{1}(P)-d^+_{D}(x)$. Clearly, $m>0$. Since $P$ is not directed, $d^-_D(x)\geq m$. Let $M$ be an arbitrary subtournament of $D[N^-_D(x)]$ on $m$ vertices and let $Q$ be a Hamilatonian directed outpath of $M$. Let $P'=v_{m+2}\cdots v_{n}$ and $N=T_x-M$. Note that we have $m+2\leq b_1(P)$. Hence, $P'$ is an outpath. Moreover,
    \begin{equation*}
        b_1(P')=b_1(P)-(m+1)=d^+_D(x)-1.
    \end{equation*}
    Hence, $d^+_D(x)=b_1(P')+1$. If $(N;P')$ is not an exception, then there exists some $u\in N^+_D(x)$ that is an origin of a path $Q'\equiv P'$ in $N$. Thus, $QxQ'\equiv P$.
    Now, we can assume that $(N;P')$ is an exception and $N^+_D(x)\subset S$, where $S$ is the set of vertices in $N$ that are not origins of a path $P'$. If $V(N)\setminus \{y\}\subset N^+_D(x)$, then $b_1(P')=d^+_D(x)-1= |N|-2$ and $|S|\geq |N|-1$. This is only possible if $N=3A$ and $P=+(n-2,1)$. As in the case where $P$ is directed, we can assume that $y$ is dominated by all vertices of $Q$. Let $u_1$ (resp., $u_2$) be the outneighbor (resp., inneighbor) of $y$ in $N$. We have $Qyu_1u_2x\equiv P$.\\
    Now, we can assume that $N^-_{D}(x)\cap V(N)\neq \emptyset$. As $M$ is arbitrarily chosen and $T_x$ is not strong, we can suppose that $N$ is not strong. Thus, $(N;P')$ is one of the exceptions of Observation \ref{obs2}.\\
    If $(N;P')$ is one of the exceptions $E_1,E_3,E_5$, then $P=+(m+2,n-m-3)$. Let $a \in N\cap N^-_D(x)$. It is seen that $x$ dominates an ingenerator $b$ of $N-a$. Let $L$ be a Hamiltonian directed inpath of $N-a$ of origin $b$ and $Q'$ be a Hamiltonian directed outpath of $D[V(M)\cup \{a\}]$. Then, $Q'xL\equiv P$.\\
    If $(N;P')$ is one of the exceptions $E_2,E_4,E_6$, then $d^+_D(x)=b_1(P')+1=3$ and $|S|=2$, which contradicts the fact that $N^+_D(x)\subseteq S$.\\
    If $(N;P')$ is one of the exceptions $E_8,E'_8,E_9,E'_9,E_{10},E'_{10}$, then we can assume that $(T_x;^*P^{-1})$ is not an exception. Indeed, otherwise we can apply Lemma \ref{lemma1} to show that $D$ contains $P^{-1}$ and so it contains $P$.
    By Corollary \ref{corollary1}, if $P^{-1}$ is an outpath (resp., inpath), then $x$ dominates (resp., is dominated by) an origin of a path $R\equiv ^*P^{-1}$ in $T_x$. Thus, $xR\equiv P^{-1}$.\\
    \\ \textbf{Subcase 1.2:} $d^+_{D}(x)=1$.\\ 
   Let $z$ be the unique outneighbor of $x$. Let $A=V(D)\setminus \{x,y,z\}$.\\
   We will first prove the case where $P$ has exactly two blocks. Suppose that $P$ has exactly two blocks, and, without loss of generality, assume that $b_1(P)\geq 3$. If $y$ dominates $z$, then every vertex in $A$ dominates $y$. Let $L$ be a directed outpath of order $b_1(P)-1$ in $D[A]$ and $L'$ be a Hamltonian directed inpath of $D[A\setminus V(L)]$. We have $LxzyL'\equiv P$. So, we may assume that $z$ dominates $y$. Let $L$ be a directed outpath of order $b_1(P)-2$ in $D[A]$ that contains the outneighbor of $y$ if it exists. Consider a Hamiltonian directed inpath $L'$ of $D[A\setminus V(L)]$. We have $LxzyL'\equiv P$.\\
   Now, we will assume that $P$ has at least three blocks. First, suppose that $N^+_D(z)\cap A\neq \emptyset$. Let $m=\min(|N^+_D(z)\cap A|,b_{1}(P)-1)$. Clearly, $m>0$. Consider an arbitrary subtournament $M$ of $D[N^+_D(z)\cap A]$ of order $m$. Let $Q$ be a directed Hamiltonian outpath of $M$. Let $M'$ be a subtournament of $D[N^-_D(z)\cap A]$ of order $b_1(P)-m-1$, which may be empty, and let $R$ be a Hamiltonian directed outpath of $M'$. Note that $RzQx$ is a directed outpath of length $b_1(P)$. Let $N$ be the subtournament of $D$ induced by $(A\cup \{y\})\setminus (V(Q)\cup V(R))$. Let $P'=v_{b_1(P)+2}\cdots v_n$.\\
   Suppose that $P'$ is directed. Then, it is an outpath. As $d^+_D(y)\leq 1$ and $|A|\geq 5$, $N$ has a Hamiltonian directed outpath $L$ of origin distinct from $y$. Thus, $RzQxL\equiv P$.\\
   Now, suppose that $P'$ is not directed. If $(N;P')$ is not an exception, then, by Corollary \ref{corollary2}, $|O_{N}(P')|\geq 2$. Moreover, if $(N;P')$ is an exception different from those of Observation \ref{obs1}, then $|O_N(P')|\geq 2$. Thus, in both of these cases, $N$ contains a path $L\equiv P'$ with origin different from $y$. Therefore, $RzQxL\equiv P$. So, we may assume that $(N;P')$ is one of the exceptions of Obseration \ref{obs1}. Moreover, if $y$ admits a unique outneighbor $w$ in $A$ (different from $z$), then we can assume without loss of generality that $w\in N^+_D(z)$. And since $M$ is arbitrarily chosen, we can further assume that $w\in M$. Thus, we can always suppose that $y$ is a sink in $N$. This leaves us with $(N;P')$ being the dual of exception $E_1(4)$.
   Observe that $P^{-1}$ is an inpath with $b_1(P^{-1})=2$. Thus, since $d^-_D(x)\geq 2$ and $|T_x|\geq 7$, by Corollary \ref{corollary1} and Observation \ref{obs1}, $T_x$ contains a path $L\equiv ^*P^{-1}$ with an origin in $N^-_D(x)$. Then, $xL\equiv P^{-1}$.\\
   Finally, we have the case where $N^+_D(z)\cap A= \emptyset$. Without loss of generality, we may suppose that $N^+_D(y)\cap A=\emptyset$. Indeed, otherwise, an outneighbor of $y$ in $A$ may play the role of $z$ in order to return to the case where $N^+_D(z)\cap A\neq \emptyset$. 
   Let $Q$ be a directed outpath in $D[A]$ of order $b_1(P)-1$ and let $P'=v_{b_1(P)+2}\cdots v_n$. Let $M=D[(A\setminus V(Q))\cup\{y\}]$. Since $Q$ is arbitrarily chosen, we can suppose that $M$ is different from $ F_1(4)$ and its dual. Note that $P'$ is not a directed inpath, since otherwise $P$ would have only two blocks. Moreover, $y$ is a sink in $M$. Thus, by Corollary \ref{corollary2} and Observation \ref{obs1},
    $M$ contains a path $L\equiv P'$ with origin different from $y$. Therefore, $QxzL\equiv P$. Case 1 is done.\\\\
\textbf{Case 2:} $b_{1}(P)=1$.\\
We also consider two subcases.\\\\
\textbf{Subcase 2.1:} $d^+_{D}(x)\geq 2$.\\
If $(T_x;^*P)$ is an exception, then, by Lemma \ref{lemma1}, $D$ contains $P$. So, we may assume that $(T_x;^{*}P)$ is not an exception. If $x$ dominates an origin of a path $Q\equiv ^*P$ in $T_x$ different than $y$, then $xQ\equiv P$. Otherwise, by Theorem \ref{HT}, we can assume that $s^-_{T_x}(N^+_D(x))\leq b_2(P)$. Hence, $2\leq d^+_D(x)\leq b_2(P)$.\\
Suppose that $P=+(1,n-3,1)$. Let $X=S^-_{T_x}(N^+_D(x))$, by the above we have $|X|\leq n-3$. Let $M=D[V(D)\setminus (X\cup \{x,y\})]$. If $y$ has at least two outneighbors in $M$, then $s^-_{T_y}(N^+_D(y))\geq n-1$, and the result follows. So, we may assume that $y$ has at most one neighbor in $M$. If $y\in X$, then $|V(M)|\geq 2$, and $M\subset N^+(y)$ which is a contradiction. Therefore, we have $y\notin X$. Now, it is clear that $X$ contains an origin of a path $R=+(n-3,1)$ in $T_y$. Then, $yR\equiv P^{-1}$.\\
Now, assume that $P\neq +(1,n-3,1)$. Let's suppose that $b_2(P)\leq \frac{n-3}{2}$ and at the end of the subcase we will say why it is sufficient to suppose that. Then,\begin{equation*}\begin{aligned}
        d^-_D(x) &\geq (n-2) - b_2(P) \\ & \geq (n-2) - \frac{n-3}{2}\\ & \geq \frac{n-1}{2} \\ & \geq b_2(P) + 1.\end{aligned}
\end{equation*}\label{eq 1}
In particular, $d^-_D(x)\geq 3$. Let $u,w,z\in N^-_D(x)$ such that $w$ is an ingenerator of $D[N^-_D(x)\setminus \{u\}]$ and let $A=T_x-u$. Then,
\begin{equation*}
    \begin{aligned}
        s^-_{A}(z,w) &\geq |N^-_D(x)\setminus \{u\}| \\ &\geq b_2(P) \\ & = b_1(^{**}P) + 1.
    \end{aligned}
\end{equation*}
If $z$ or $w$ is an origin of a path $Q\equiv ^{**}P$ in $A$, then $uxQ\equiv P$. Otherwise, by Theorem \ref{HT}, $(A;^{**}P)$ is an exception. Moreover, we can assume that $N^-_D(x)\setminus u\subset S$, where $S$ is the set of vertices in $A$ that are not origins of $^{**}P$.
Therefore, $|S|\geq |N^-_D(x)\setminus \{u\}|\geq \frac{n-1}{2}-1\geq \frac{n-3}{2}$. In particular, $|S|\geq 3$. Also note that if $(A;^{**}P)$ is a finite exception, then $A$ is strong, and so, \begin{equation*}
 \begin{aligned}
    s^-_{T_x}(N^+_D(x))&\geq |A|\\ &=n-2
    \\&\geq b_{2}(P)+1    \end{aligned}
\end{equation*}
Which contradicts the above. We are left with $A$ being one of the duals of $F_1,F_8,F_9,F_{10}$. If $A$ is one of the duals of $F_8,F_9,F_{10}$, then $x$ dominates the ingenerators of $A$. Hence, $s^-_{T_x}(N^+_D(x))>b_2(P)$, a contradiction. So, we may assume that $(A;^{**}P)$ is the dual of $E_1(n)$. Let $P'=\overline{P^{-1}}=+(n-4,2,1)$. By Case 1, $\overline{D}$ contains $P'$. Then, $D$ contains $P$.\\
 In the above, we mentioned that it is sufficient to assume that $b_2(P)\leq \frac{n-3}{2}$. Indeed, suppose that $b_2(P)>\frac{n-3}{2}$. First, suppose that $P^{-1}$ is an outpath. If $b_1(P^{-1})\geq 2$, then the result follows by Case 1. Otherwise, $b_1(P^{-1})=1$. Then, $b_2(P^{-1})\leq \frac{n-3}{2}$, and we are done. Finally, assume that $P^{-1}$ is an inpath. Let $P'=\overline{P^{-1}}$. Again, if $b_1(P')\geq 2$, then $\overline{D}$ contains $P'$. Hence, $D$ contains $P$. Otherwise, $b_1(P')=1$. As $P\neq +(1,n-3,1)$, we have $b_2(P')\leq \frac{n-3}{2}$. Moreover, $d^+_{\overline{D}}(x)=d^-_D(x)\geq 2$. Thus, by the above $\overline{D}$ contains $P'$. Therefore, $D$ contains $P$.\\
\\ 
\textbf{Subcase 2.2:} $d^+_{D}(x)=1$.\\
Let $z$ be the unique outneighbor of $x$. Let $B=V(D)\setminus \{x,y,z\}$. First, suppose that $P=+(1,n-3,1)$. Assume that $y$ dominates $z$. Since  $|B|\geq 5$, $B\neq 3A$. Then, $B$ contains a path $Q\equiv v_4\cdots v_n$. We have $yzxQ\equiv P$. Now suppose that $z$ dominates $y$. Let $w$ be an inneighbor of $y$ different than $x$ and $z$. As in the above, $B-w$ contains a path $Q\equiv v_5\cdots v_n$ and $wyzxQ\equiv P$.\\ 
Now we can assume that $P\neq +(1,n-3,1)$. Let $w\in B$ and let $M=D[T-\{x,w\}]$. For clarity, denote by $P'=^{**}P$. Note that $(M; P')$ cannot be $Exc33, Dual Exc33,Exc15$ or $DualExc24$. That is because $y\in M$, and $d^+_D(y)\leq 1$. Suppose that $(M, P')$ is not an exception, and $b_{1}(P')\leq |M|-3$. Then, by Corollary \ref{corollary2}, $O_{M}(P')\geq |M|-b_{1}(P')\geq 3$. Moreover, if $(M, P')$ is an exception that is different from $Exc24$ and the dual of $Exc15$, we also have  $O_{M}(P')\geq3$ by Observation \ref{obs1}. Therefore, in both cases, there exists some $u$ in $M$, different from $y$ and $z$, with $u$ origin of $Q \equiv P'$ in $M$. Thus, we have $wxQ\equiv P$.\\
If $(M,P')$ is $Exc24$, then $y$ must be vertex $4$ which is in $S$, so there exists some $ u $ in $M-\{y,z\}$ with $u$ origin of $Q\equiv P'$ in $M$. Thus, as in the above $wxQ\equiv P$.\\
If $(M,P')$ is $DualExc15$, let $Q=^*P^{-1}$. Then, by Observation \ref{obs1}, we have $O_{T_x}(Q)\geq 3$. Therefore, $ T_{x}$ contains $ Q\equiv ^{*}P^{-1}$ with origin different from $z$ and $y$. Hence, since $P^{-1}$ is an inpath, we have $xQ\equiv P^{-1}$.\\
Finally, suppose that $(M, P')$ is not an exception and $b_{1}(P')\geq |M|-2$. Therefore, $P$ is one of $+(1,1,m-2,1)$, $+(1,1,m-1)$, and $+(1,m-1)$. Notice that we either have $b_1(P^{-1})\geq 2$ or $b_{1}(^{**}P^{-1})\leq |M|-3$. These cases were treated above, and thus $D$ contains $P^{-1}$ so it contains $P$.
\end{proof}
\begin{corollary}
Let $T$ be a tournament of order at least $8$ and $e$ be an arc in $T$. If $T$ is strong, then $T-e$ contains every oriented Hamiltonian path.
\end{corollary}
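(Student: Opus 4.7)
The plan is to derive this corollary directly from Theorem \ref{thm:main}: it suffices to show that if $T$ is strong, then no triple $(T, P, \{x, y\})$ can be a special exception. Passing to the dual tournament (which is strong precisely when $T$ is, and which interchanges outpaths with inpaths) reduces the problem to the case where $P$ is an outpath. Write $D = T - e$ and assume without loss of generality that $e$ is oriented as $x \to y$ in $T$.

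I would then dispatch the two clauses of the special-exception definition separately. For clause (ii), if $P$ has exactly two blocks and both $x$ and $y$ are sinks in $D$, then restoring $e = (x, y)$ in $T$ adds an outneighbor to $x$ only, so $y$ remains a sink in $T$, immediately contradicting strongness.

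For clause (i), suppose $P$ is directed and set $A := N^-_D(x) = N^-_D(y)$ and $B := N^+_D(x) = N^+_D(y)$, with $A$ dominating $B$. The key structural observation is that $B$ sends no arc outside itself in $T$: no arc from $B$ to $A$ exists (as $A$ dominates $B$), and no arc from $B$ to $\{x, y\}$ exists (since $B$ lies in the common outneighborhood of $x$ and $y$ in $D$, and the restored arc $e$ enters $y$). If $B \neq \emptyset$, this gives a non-empty proper set with no outgoing arcs, contradicting strongness. If $B = \emptyset$, then $y$ has no outneighbor in $T$, again contradicting strongness.

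Thus a strong $T$ admits no special exception, and Theorem \ref{thm:main} delivers the conclusion. The whole argument is a short structural check against the definition of special exception, so I do not anticipate any real obstacle; the substance lies entirely in the Main Theorem, which has already been established.
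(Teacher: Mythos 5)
Your proof is correct and matches the paper's intent: the paper states this corollary as an immediate consequence of the main theorem, the point being exactly the verification you carry out, namely that each clause of the special-exception definition forces $T$ to have a sink or a non-empty proper vertex set with no outgoing arcs, hence $T$ is not strong. Your explicit check of both clauses (and the reduction to outpaths via duality) is exactly the routine argument the paper leaves to the reader.
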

\begin{corollary}
Let $T$ be a tournament of order at least $8$. If $\min (\delta^-(T),\delta^+(T))\geq 1$, then, for every arc $e$, $T-e$ contains every non-directed Hamiltonian path.
\end{corollary}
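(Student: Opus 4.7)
The plan is to deduce this corollary directly from Theorem \ref{thm:main}, which asserts that $T-e$ contains $P$ precisely when $(T,P,\{x,y\})$ is not a special exception, where $x,y$ are the endpoints of $e$. Since the corollary restricts attention to non-directed paths $P$, case (i) of the special exception definition (which requires $P$ to be directed) cannot arise. My only task is therefore to rule out case (ii) under the hypothesis $\min(\delta^-(T),\delta^+(T))\geq 1$.

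Write the deleted arc as $e=(u,v)$, so that $u\to v$ in $T$. Removing $e$ drops $d^+(u)$ and $d^-(v)$ by one each, and leaves $d^+(v)$ and $d^-(u)$ unchanged. Case (ii) requires either (a) both $u$ and $v$ to be sinks in $D=T-e$, when $P$ is a two-block outpath, or (b) both $u$ and $v$ to be sources in $D$, when $P$ is a two-block inpath. In subcase (a), the identity $d^+_D(v)=d^+_T(v)=0$ would force $v$ to be a sink in $T$, contradicting $\delta^+(T)\geq 1$. In subcase (b), the identity $d^-_D(u)=d^-_T(u)=0$ would force $u$ to be a source in $T$, contradicting $\delta^-(T)\geq 1$. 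Either way, case (ii) is impossible.

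Hence, for every arc $e$ and every non-directed Hamiltonian path $P$, the triple $(T,P,\{x,y\})$ is not a special exception, and Theorem \ref{thm:main} gives the claim. The whole argument amounts to careful degree bookkeeping; the only subtlety worth flagging is keeping straight which endpoint of $e$ has its out-degree reduced and which its in-degree, so that the correct one of $\delta^+(T)\geq 1$ or $\delta^-(T)\geq 1$ is invoked to reach a contradiction in each subcase.
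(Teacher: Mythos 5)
Your proposal is correct and matches the paper's intent: the corollary is stated as an immediate consequence of Theorem \ref{thm:main}, and your argument---ruling out special exception (i) because $P$ is non-directed, and ruling out (ii) by noting that deleting $(u,v)$ leaves $d^+(v)$ and $d^-(u)$ unchanged so neither endpoint can become a sink (resp.\ source) unless it already was one in $T$---is precisely the degree bookkeeping the authors leave to the reader.
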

\begin{corollary}
Let $T$ be a tournament of order $n\geq 8$ and $P$ be a path of order $n$ having at least three blocks. For every arc $e$, $T-e$ contains $P$.
\end{corollary}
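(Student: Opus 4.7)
The plan is to derive this directly from Theorem~\ref{thm:main}. Let $e$ be the arc joining $x$ and $y$, and let $D$ denote the digraph $T-e$. By the main theorem, $D$ contains $P$ if and only if $(T,P,\{x,y\})$ is not a special exception, so it suffices to rule out that $(T,P,\{x,y\})$ falls into either of the two cases of the definition of a special exception.

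First I would observe that case (i) of the definition explicitly requires $P$ to be directed; a directed path, however, consists of a single block, which contradicts our hypothesis that $P$ has at least three blocks. Hence clause (i) is impossible. Next, I would note that case (ii) explicitly requires $P$ to have exactly two blocks, again in direct contradiction with our assumption. Hence clause (ii) is also impossible.

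Since neither clause applies, $(T,P,\{x,y\})$ is not a special exception, and Theorem~\ref{thm:main} yields that $D=T-e$ contains $P$. As the arc $e$ was arbitrary, the corollary follows. There is essentially no obstacle here: the corollary is a purely definitional consequence of the main theorem, since both special exception configurations place a rigid restriction on the number of blocks of $P$ (either one or two), which the hypothesis of at least three blocks immediately excludes.
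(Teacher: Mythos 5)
Your proof is correct and matches the paper's (implicit) argument: the corollary is stated as an immediate consequence of the main theorem, since both clauses of the special-exception definition force $P$ to have one or two blocks, which the hypothesis of at least three blocks rules out.
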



\appendix
\section{The exceptions defined by Havet and Thomass\'{e}}\label{appendix:A}
Havet and Thomass\'{e} \cite{Hav} established two categories of exceptions: the finite exceptions and the infinite families of exceptions. The notation of an exception is the following: $[T;P;S;P_1,...,P_k]$ where $T$ is a tournament illustrated in Figures \ref{figure 1}, \ref{figure 2} and \ref{figure 3}, $P$ is an outpath, $S$ is the set of vertices of $T$ which are not origin of $P$ and the paths $P_1,...,P_k$ are the paths of $T$ whose origins are precisely the vertices of $V(T)\setminus S$.\\
Exc 0: $[3A;(1,1);\{1,2,3\}]$
\\Exc 1: $[4A;(1,1,1);\{1,2,3\};4213]$
\\Exc 2: $[4A;(1,2);\{3,4\};1324;2314]$
\\Exc 3: $[4A;(2,1);\{1,2,\};3421;4132]$
\\Exc 4: $[5A;(1,1,1,1);\{1,2,3,4,5\}]$
\\Exc 5: $[5B;(2,1,1);\{1,2,3\};45213;51423]$
\\Exc 6: $[5C;(1,1,2);\{4,5\};12534;23514;31524]$
\\Exc 7: $[5C;(2,1,1);\{1,2,3,4\};51432]$
\\Exc 8: $[5D;(1,1,1,1);\{2,5\};12543;35124;42153]$
\\Exc 9: $[5E;(1,1,1,1);\{2,4,53\};12453;35421]$
\\Exc 10: $[5E;(1,2,1);\{3,5\};12435;23145;45312]$
\\Exc 11: $[5E;(2,2);\{1,2\};34215;42315;52314]$
\\Exc 12: $[5E;(1,1,2);\{1,2\};35412;41523;51423]$
\\Exc 13: $[6A;(3,1,1);\{3,4\};156324;256143;562341;612345]$
\\Exc 14: $[6B;(2,1,1,1);\{3,4\};154326;254316;562143;612345]$
\\Exc 15: $[6C;(1,1,2,1);\{1,2,3,6\};435261;534261]$
\\Exc 16: $[6C;(1,2,1,1);\{4,5,6\};163425;263415;362415]$
\\Exc 17: $[6D;(2,1,1,1);\{2,4,6\};124365;346521;562143]$
\\Exc 18: $[6D;(1,2,2);\{2,4,6\};126345;341562;564123]$
\\Exc 19: $[6D;(1,1,1,2);\{2,4,6\};126543;341265;563421]$
\\Exc 20: $[6E;(1,1,1,1,1);\{1,2\};341256;465213;516324;621435]$
\\Exc 21: $[6E;(2,1,1,1);\{1,2\};346521;452136;562143;634125]$
\\Exc 21: $[6E;(2,1,1,1);\{1,2\};346521;452136;562143;634125]$
\\Exc 22: $[6F;(1,1,1,1,1);\{1,2,3\};421563;532641;613452]$
\\Exc 23: $[6G;(1,1,1,1,1);\{4,6\};145632;216453;326415;546132]$
\\Exc 24: $[6H;(1,1,1,1,1);\{1,2,3,4\};543162;613425]$
\\Exc 25: $[6H;(1,1,1,2);\{4,5\};142536;243516;341526;613452]$
\\Exc 26: $[6H;(1,1,3);\{4,5,6\};145623;245631;345612]$
\\Exc 27: $[6H;(1,3,1);\{4,6\};126534;236514;316524;543261]$
\\Exc 28: $[6H;(2,1,2);\{4,5\};124563;234561;314562;614235]$
\\Exc 29: $[6I;(1,1,1,1,1);\{4,6\};145632;213654;365421;546231]$
\\Exc 30: $[6J;(1,1,1,1,1);\{4,6\};162453;261453;312465;542631]$
\\Exc 31: $[6K;(1,2,2);\{3,4\};146532;246531;541632;634125]$
\\Exc 32: $[6L;(1,2,1,1);\{5,6\};163425;263415;361425;456132]$
\\Exc 33: $[7A;(1,1,1,1,1,1);\{1,2,3,4,5,6,7\}]$
\\Exc 34: $[7B;(1,1,2,1,1);\{1,2,3\};4576132;5674132;6475132;7541263]$
\\Exc 35: $[7B;(2,1,3);\{1,2,3\};4315627;5316427;6314527;7435612]$
\\Exc 36: $[7B;(2,3,1);\{1,2\};3125476;4567132;5647132;6457132;7421356]$
\\Exc 37: $[7C;(1,1,1,1,1,1);\{4,5,6\};1243567;2341567;3142567;7541632]$
\\Exc 38: $[7C;(1,1, 2,1,1);\{1,2,3\};4156327;5164327;6145327;7541263]$
\\Exc 39: $[7C;(2,1,3);\{1,2,3\};4315627;5316427;6314527;7435612]$
\\Exc 40: $[7D;(1,1,1,2,1);\{1,2\};3412756;4512736;5312746;6215437;7215436]$
\\Exc 41: $[7D;(1,1,1,3);\{6,7\};1546327;2546317;3745216;4753216;5734216]$
\\Exc 42: $[7D;(2,2,1,1);\{6,7\};1342675;2341675;3465127;4563127;5364127]$
\\Exc 43: $[7E;(1,1,2,1,1);\{2,7\};1236745;3214756;4213756;5213746;6734215]$
\\Exc 44: $[7F;(1,1,1,3);\{6,7\};1732546;2713546;3721546;4127635;5127634]$
\\Exc 45: $[7G;(2,1,2,1);\{1,7\};2654317;3654721;4367125;5367124;6517234]$
\\Exc 46: $[7H;(2,2,2);\{4,7\};1746532;2746531;3126574;5321674;6247531]$
\\Exc 47: $[7I;(1,1,2,1,1);\{4,5,7\};1456237;2456137;3456127;6135427]$
\\Exc 48: $[7J;(1,1,2,1,1);\{1,2\};3245167;4235167;5234167;6234157;7234156]$
\\Exc 49: $[8A;(1,1,1,1,1,1,1);\{1,2\};35461278;46527183;56487213;67341285;74358216;\\85347216]$
\\Exc 50: $[8A;(2,1,1,1,1,1);\{1,2\};34652718;46752138;56734128;68214375;78216453;\\83412576]$
\\Exc 51: $[8B;(2,1,2,1,1);\{2,8\};13245867;32145867;42156873;52164873;62145873;\\73245861]$.\\

\begin{figure}[H]
\begin{adjustbox}{max width=\textwidth, max height=\textheight}
\scalebox{0.5}{
\begin{tikzpicture}
\tikzset{enclosed/.style={draw,circle,inner sep=2pt,minimum size=4pt,fill=black}}
\tikzset{->-/.style={decoration={
            markings,
            mark=at position #1 with
            {\arrow{>}}},postaction={decorate}}}
\node[enclosed,label={left,yshift=.2cm:1}](1)at(0,0){};
\node[enclosed,label={left,yshift=.2cm:2}](2)at(1.5,2.6){};
\node[enclosed,label={right,yshift=.2cm:3}](3)at(3,0){};
\draw[black,->-=.5] (1)--(2);
\draw[black,->-=.5] (2)--(3);
\draw[black,->-=.5] (3)--(1);

\node[minimum size=.1pt,label={left:3A}](3A)at(2.15,-.5){};

\node[ellipse,minimum width=4cm,minimum height=1.3cm,draw](4Aa)at(7.6,3){};
\node[minimum size=.1pt](4Aa1)at(6.7,2.52){};
\node[minimum size=.1pt](4Aa2)at(8.5,2.52){};
\node[enclosed,label={left,yshift=.2cm:1}](4A1)at(6.4,3){};
\node[enclosed,label={right,yshift=.2cm:2}](4A2)at(8.8,3){};
\draw[black,->-=.5] (4A1)--(4A2);
\node[enclosed,label={right,yshift=.2cm:3}](4A3)at(9.1,0){};
\node[enclosed,label={left,yshift=.2cm:4}](4A4)at(6.1,0){};
\draw[black,->-=.5] (4A3)--(4A4);
\draw[black,->-=.5] (4A4)--(4Aa1);
\draw[black,->-=.5] (4Aa2)--(4A3);

\node[minimum size=.1pt,label={left:4A}](4A)at(8.25,-.5){};

\node[enclosed,label={left,yshift=.2cm:1}](5A1)at(11.58,2.15){};
\node[enclosed,label={right,yshift=.2cm:2}](5A2)at(13.5,3.8){};
\node[enclosed,label={right,yshift=.2cm:3}](5A3)at(15.42,2.15){};
\node[enclosed,label={right,yshift=.2cm:4}](5A4)at(14.7,0){};\node[enclosed,label={left,yshift=.2cm:5}](5A5)at(12.3,0){};

\draw[black,->-=.5] (5A1)--(5A2);
\draw[black,->-=.5] (5A1)--(5A3);
\draw[black,->-=.5] (5A2)--(5A3);
\draw[black,->-=.5] (5A2)--(5A4);
\draw[black,->-=.5] (5A3)--(5A4);
\draw[black,->-=.5] (5A3)--(5A5);
\draw[black,->-=.5] (5A4)--(5A5);
\draw[black,->-=.5] (5A4)--(5A1);
\draw[black,->-=.5] (5A5)--(5A1);
\draw[black,->-=.5] (5A5)--(5A2);

\node[minimum size=.1pt,label={left:5A}](5A)at(14.15,-.5){};

\node[ellipse,minimum width=4cm,minimum height=4cm,draw](5Ba)at(1.5,-3.2){};
\node[minimum size=.1pt](5Ba1)at(0.6,-4.8){};
\node[minimum size=.1pt](5Ba2)at(2.4,-4.8){};
\node[enclosed,label={left,yshift=.2cm:1}](5B1)at(0.3,-4){};
\node[enclosed,label={right,yshift=.2cm:2}](5B2)at(1.5,-1.93){};\node[enclosed,label={right,yshift=.2cm:3}](5B3)at(2.7,-4){};
\draw[black,->-=.5] (5B1)--(5B2);
\draw[black,->-=.5] (5B2)--(5B3);
\draw[black,->-=.5] (5B1)--(5B3);
\node[enclosed,label={right,yshift=.2cm:4}](5B4)at(3,-7){};
\node[enclosed,label={left,yshift=.2cm:5}](5B5)at(0,-7){};
\draw[black,->-=.5] (5B4)--(5B5);
\draw[black,->-=.5] (5B5)--(5Ba1);
\draw[black,->-=.5] (5Ba2)--(5B4);

\node[minimum size=.1pt,label={left:5B}](5B)at(2.15,-7.5){};

\node[ellipse,minimum width=4cm,minimum height=4cm,draw](5Ca)at(7.6,-3.2){};
\node[minimum size=.1pt](5Ca1)at(6.7,-4.8){};
\node[minimum size=.1pt](5Ca2)at(8.5,-4.8){};
\node[enclosed,label={left,yshift=.2cm:1}](5C1)at(6.4,-4){};
\node[enclosed,label={right,yshift=.2cm:2}](5C2)at(7.6,-1.93){};\node[enclosed,label={right,yshift=.2cm:3}](5C3)at(8.8,-4){};
\draw[black,->-=.5] (5C1)--(5C2);
\draw[black,->-=.5] (5C2)--(5C3);
\draw[black,->-=.5] (5C3)--(5C1);
\node[enclosed,label={right,yshift=.2cm:4}](5C4)at(9.1,-7){};
\node[enclosed,label={left,yshift=.2cm:5}](5C5)at(6.1,-7){};
\draw[black,->-=.5] (5C4)--(5C5);
\draw[black,->-=.5] (5C5)--(5Ca1);
\draw[black,->-=.5] (5Ca2)--(5C4);

\node[minimum size=.1pt,label={left:5C}](5C)at(8.25,-7.5){};

\node[ellipse,minimum width=4cm,minimum height=1.3cm,draw](5D)at(13.5,-4){};
\node[minimum size=.1pt](5Da1)at(12.6,-4.48){};
\node[minimum size=.1pt](5Da2)at(14.4,-4.48){};
\node[enclosed,label={left,yshift=.2cm:1}](5D1)at(12.3,-4){};
\node[enclosed,label={right,yshift=.2cm:2}](5D2)at(14.7,-4){};
\draw[black,->-=.5] (5D1)--(5D2);
\node[enclosed,label={right,yshift=.2cm:3}](5D3)at(15,-7){};
\node[enclosed,label={left,yshift=.2cm:4}](5D4)at(12,-7){};
\draw[black,->-=.5] (5D3)--(5D4);
\draw[black,->-=.5] (5D4)--(5Da1);
\draw[black,->-=.5] (5Da2)--(5D3);
\node[enclosed,label={left,yshift=.01cm:5}](5D5)at(13.5,-5.5){};
\draw[black,->-=.5] (5D1)--(5D5);
\draw[black,->-=.5] (5D3)--(5D5);
\draw[black,->-=.5] (5D5)--(5D2);
\draw[black,->-=.5] (5D5)--(5D4);

\node[minimum size=.1pt,label={left:5D}](5D)at(14.15,-7.5){};

\node[ellipse,minimum width=4cm,minimum height=1.3cm,draw](5Ea)at(2.4,-9.5){};
\node[minimum size=.1pt](5Ea1)at(1.5,-9.98){};
\node[minimum size=.1pt](5Ea2)at(3.3,-9.98){};
\node[enclosed,label={left,yshift=.2cm:1}](5E1)at(1.2,-9.5){};
\node[enclosed,label={right,yshift=.2cm:2}](5E2)at(3.6,-9.5){};
\draw[black,->-=.5] (5E1)--(5E2);
\node[enclosed,label={right,yshift=.2cm:3}](5E3)at(4.4,-12.5){};
\node[ellipse,minimum width=4cm,minimum height=1.3cm,draw](5Eb)at(1.25,-12.5){};
\node[enclosed,label={left,yshift=.2cm:4}](5E4)at(0.05,-12.5){};
\node[enclosed,label={right,yshift=.2cm:5}](5E5)at(2.45,-12.5){};
\draw[black,->-=.5] (5E3)--(5Eb);
\draw[black,->-=.5] (5Ea)--(5E3);
\draw[black,->-=.5] (5E4)--(5E5);
\draw[black,->-=.5] (5Eb)--(5Ea);

\node[minimum size=.1pt,label={left:5E}](5E)at(3.1,-13.65){};

\node[ellipse,minimum width=5cm,minimum height=3.2cm,draw](6Aa)at(8.1,-9.95){};
\node[minimum size=.1pt](6Aa1)at(6.5,-11.07){};
\node[minimum size=.1pt](6Aa2)at(9.7,-11.07){};
\node[ellipse,minimum width=2.2cm,minimum height=1.3cm,draw](6Ab)at(9.1,-10.5){};
\node[enclosed,label={left,yshift=.2cm:1}](6A1)at(6.9,-10.5){};
\node[enclosed,label={right,yshift=.2cm:2}](6A2)at(8.1,-9){};
\node[enclosed,label={left,yshift=.01cm:3}](6A3)at(8.55,-10.5){};
\node[enclosed,label={right,yshift=.01cm:4}](6A4)at(9.65,-10.5){};\node[enclosed,label={right,yshift=.2cm:5}](6A5)at(9.7,-12.5){};\node[enclosed,label={left,yshift=.2cm:6}](6A6)at(6.5,-12.5){};

\draw[black,->-=.5] (6A1)--(6A2);
\draw[black,->-=.5] (6A2)--(6Ab);
\draw[black,->-=.5] (6Ab)--(6A1);
\draw[black,->-=.5] (6A4)--(6A3);
\draw[black,->-=.5] (6Aa2)--(6A5);
\draw[black,->-=.5] (6A5)--(6A6);
\draw[black,->-=.5] (6A6)--(6Aa1);

\node[minimum size=.1pt,label={left:6A}](6A)at(8.75,-13.65){};

\node[enclosed,label={left,yshift=.2cm:6}](6B6)at(12.58,-10.35){};

\node[ellipse,minimum width=2.5cm,minimum height=1.2cm,draw](6Ba)at(14.5,-8.8){};

\node[enclosed,label={right,yshift=.3cm:1}](6B1)at(13.7,-8.8){};
\node[enclosed,label={left,yshift=.3cm:2}](6B2)at(15.3,-8.8){};
\node[enclosed,label={right,yshift=.2cm:3}](6B3)at(16.42,-10.35){};
\node[enclosed,label={right,yshift=.2cm:4}](6B4)at(15.7,-12.5){};\node[enclosed,label={left,yshift=.2cm:5}](6B5)at(13.3,-12.5){};
\draw[black,->-=.5] (6B1)--(6B2);
\draw[black,->-=.5] (6B6)--(6Ba);
\draw[black,->-=.5] (6B6)--(6B3);
\draw[black,->-=.5] (6Ba)--(6B5);
\draw[black,->-=.5] (6Ba)--(6B4);
\draw[black,->-=.5] (6B3)--(6B4);
\draw[black,->-=.5] (6B3)--(6Ba);
\draw[black,->-=.5] (6B5)--(6B4);
\draw[black,->-=.5] (6B4)--(6B6);
\draw[black,->-=.5] (6B5)--(6B6);
\draw[black,->-=.5] (6B5)--(6B3);

\node[minimum size=.1pt,label={left:6B}](6B)at(15.15,-13.65){};

\end{tikzpicture}}
\scalebox{0.5}{
\begin{tikzpicture}
\tikzset{enclosed/.style={draw,circle,inner sep=2pt,minimum size=4pt,fill=black}}
\tikzset{->-/.style={decoration={
            markings,
            mark=at position #1 with
            {\arrow{>}}},postaction={decorate}}}
            
\node[ellipse,minimum width=3cm,minimum height=3cm,draw](a)at(0,0){};
\node[enclosed,label={left,yshift=.2cm:1}](1)at(-0.9,-0.7){};
\node[enclosed,label={right,yshift=.2cm:2}](2)at(0,0.9){};\node[enclosed,label={right,yshift=.2cm:3}](3)at(0.9,-0.7){};
\draw[black,->-=.5] (1)--(2);
\draw[black,->-=.5] (2)--(3);
\draw[black,->-=.5] (3)--(1);
\node[enclosed,label={right,yshift=.2cm:6}](6)at(1.5,-3.3){};
\node[ellipse,minimum width=3cm,minimum height=1.3cm,draw](b)at(-1.5,-3.3){};
\node[enclosed,label={right,yshift=.3cm:4}](4)at(-2.5,-3.3){};
\node[enclosed,label={left,yshift=.3cm:5}](5)at(-0.5,-3.3){};
\draw[black,->-=.5] (4)--(5);
\draw[black,->-=.5] (6)--(b);
\draw[black,->-=.5] (b)--(a);
\draw[black,->-=.5] (a)--(6);

\node[minimum size=.1pt,label={left:6C}](6C)at(.65,-4){};

\node[ellipse,minimum width=3cm,minimum height=1.3cm,draw](6Da)at(5,0.3){};
\node[ellipse,minimum width=1.3cm,minimum height=3cm,draw](6Db)at(8,-1.8){};
\node[ellipse,minimum width=3cm,minimum height=1.3cm,draw](6Dc)at(5,-3.3){};
\node[enclosed,label={right,yshift=.3cm:1}](6D1)at(4,0.3){};
\node[enclosed,label={left,yshift=.3cm:2}](6D2)at(6,0.3){};
\node[enclosed,label={left,yshift=-.2cm:3}](6D3)at(8,-0.8){};
\node[enclosed,label={left,yshift=.2cm:4}](6D4)at(8,-2.8){};
\node[enclosed,label={left,yshift=.3cm:5}](6D5)at(6,-3.3){};
\node[enclosed,label={right,yshift=.3cm:6}](6D6)at(4,-3.3){};

\draw[black,->-=.5] (6D1)--(6D2);
\draw[black,->-=.5] (6D3)--(6D4);
\draw[black,->-=.5] (6D5)--(6D6);
\draw[black,->-=.5] (6Da)--(6Db);
\draw[black,->-=.5] (6Db)--(6Dc);
\draw[black,->-=.5] (6Dc)--(6Da);

\node[minimum size=.1pt,label={left:6D}](6D)at(6.65,-4){};

\node[enclosed,label={left,yshift=.2cm:6}](6E6)at(10.08,-1.15){};

\node[ellipse,minimum width=2.5cm,minimum height=1.2cm,draw](6Ea)at(12,0.4){};

\node[enclosed,label={right,yshift=.3cm:1}](6E1)at(11.2,0.4){};
\node[enclosed,label={left,yshift=.3cm:2}](6E2)at(12.8,0.4){};

\node[enclosed,label={right,yshift=.2cm:3}](6E3)at(13.92,-1.15){};
\node[enclosed,label={right,yshift=.2cm:4}](6E4)at(13.2,-3.3){};\node[enclosed,label={left,yshift=.2cm:5}](6E5)at(10.8,-3.3){};
\draw[black,->-=.5] (6E1)--(6E2);
\draw[black,->-=.5] (6E6)--(6Ea);
\draw[black,->-=.5] (6E6)--(6E3);
\draw[black,->-=.5] (6Ea)--(6E3);
\draw[black,->-=.5] (6Ea)--(6E4);
\draw[black,->-=.5] (6E3)--(6E4);
\draw[black,->-=.5] (6E3)--(6E5);
\draw[black,->-=.5] (6E4)--(6E5);
\draw[black,->-=.5] (6E4)--(6E6);
\draw[black,->-=.5] (6E5)--(6E6);
\draw[black,->-=.5] (6E5)--(6Ea);

\node[minimum size=.1pt,label={left:6E}](6E)at(12.65,-4){};

\node[enclosed,label={right,yshift=.2cm:1}](6F1)at(1.2,-5.5){};
\node[enclosed,label={right,yshift=.2cm:2}](6F2)at(1.2,-9.9){};
\node[enclosed,label={left,yshift=.2cm:3}](6F3)at(-2.5,-7.7){};
\node[enclosed,label={left,yshift=.2cm:4}](6F4)at(-1.2,-9.9){};
\node[enclosed,label={left,yshift=.2cm:5}](6F5)at(-1.2,-5.5){};\node[enclosed,label={right,yshift=.2cm:6}](6F6)at(2.5,-7.7){};

\draw[black,->-=.4] (6F1)--(6F2);
\draw[black,->-=.5] (6F1)--(6F5);
\draw[black,->-=.2] (6F2)--(6F3);
\draw[black,->-=.5] (6F2)--(6F6);
\draw[black,->-=.2] (6F3)--(6F1);
\draw[black,->-=.5] (6F3)--(6F4);
\draw[black,->-=.4] (6F4)--(6F1);
\draw[black,->-=.5] (6F4)--(6F2);
\draw[black,->-=.2] (6F4)--(6F6);
\draw[black,->-=.4] (6F5)--(6F2);
\draw[black,->-=.5] (6F5)--(6F3);
\draw[black,->-=.4] (6F5)--(6F4);
\draw[black,->-=.5] (6F6)--(6F1);
\draw[black,->-=.4] (6F6)--(6F3);
\draw[black,->-=.2] (6F6)--(6F5);

\node[minimum size=.1pt,label={left:6F}](6F)at(.65,-10.6){};

\node[enclosed,label={right,yshift=.2cm:2}](6G2)at(7.6,-5.5){};
\node[enclosed,label={right,yshift=.2cm:4}](6G4)at(7.6,-9.9){};
\node[enclosed,label={left,yshift=.2cm:6}](6G6)at(3.9,-7.7){};
\node[enclosed,label={left,yshift=.2cm:5}](6G5)at(5.2,-9.9){};
\node[enclosed,label={left,yshift=.2cm:1}](6G1)at(5.2,-5.5){};\node[enclosed,label={right,yshift=.2cm:3}](6G3)at(8.9,-7.7){};

\draw[black,->-=.4] (6G1)--(6G4);
\draw[black,->-=.4] (6G1)--(6G5);
\draw[black,->-=.5] (6G2)--(6G1);
\draw[black,->-=.4] (6G2)--(6G5);
\draw[black,->-=.2] (6G3)--(6G1);
\draw[black,->-=.5] (6G3)--(6G2);
\draw[black,->-=.4] (6G3)--(6G6);
\draw[black,->-=.4] (6G4)--(6G2);
\draw[black,->-=.5] (6G4)--(6G3);
\draw[black,->-=.2] (6G5)--(6G3);
\draw[black,->-=.5] (6G5)--(6G4);
\draw[black,->-=.5] (6G5)--(6G6);
\draw[black,->-=.5] (6G6)--(6G1);
\draw[black,->-=.2] (6G6)--(6G2);
\draw[black,->-=.2] (6G6)--(6G4);

\node[minimum size=.1pt,label={left:6G}](6G)at(7.05,-10.6){};

\node[ellipse,minimum width=3cm,minimum height=3cm,draw](6Ha)at(12,-6.6){};
\node[enclosed,label={left,yshift=.2cm:1}](6H1)at(11.1,-7.3){};
\node[enclosed,label={right,yshift=.2cm:2}](6H2)at(12,-5.7){};\node[enclosed,label={right,yshift=.2cm:3}](6H3)at(12.9,-7.3){};
\draw[black,->-=.5] (6H1)--(6H2);
\draw[black,->-=.5] (6H2)--(6H3);
\draw[black,->-=.5] (6H3)--(6H1);
\node[enclosed,label={right,yshift=.2cm:6}](6H6)at(13.5,-9.9){};
\node[ellipse,minimum width=3cm,minimum height=1.3cm,draw](6Hb)at(10.5,-9.9){};
\node[enclosed,label={right,yshift=.3cm:4}](6H4)at(9.5,-9.9){};
\node[enclosed,label={left,yshift=.3cm:5}](6H5)at(11.5,-9.9){};
\draw[black,->-=.5] (6H5)--(6H4);
\draw[black,->-=.5] (6Hb)--(6H6);
\draw[black,->-=.5] (6Ha)--(6Hb);
\draw[black,->-=.5] (6H6)--(6Ha);

\node[minimum size=.1pt,label={left:6H}](6H)at(12.5,-10.6){};

\node[ellipse,minimum width=4cm,minimum height=1.3cm,draw](6Ia)at(0.2,-13){};
\node[minimum size=.1pt](6Ia1)at(-0.7,-13.48){};
\node[minimum size=.1pt](6Ia2)at(1.1,-13.48){};
\node[enclosed,label={left,yshift=.1cm:5}](6I5)at(-1,-13){};
\node[enclosed,label={right,yshift=.1cm:4}](6I4)at(1.4,-13){};
\draw[black,->-=.5] (6I5)--(6I4);
\node[enclosed,label={right,yshift=.2cm:3}](6I3)at(2.2,-16){};

\node[ellipse,minimum width=4cm,minimum height=1.3cm,draw](6Ib)at(-1.3,-16){};
\node[enclosed,label={left,yshift=.1cm:2}](6I2)at(-2.5,-16){};
\node[enclosed,label={right,yshift=.1cm:1}](6I1)at(-0.1,-16){};
\draw[black,->-=.5] (6I2)--(6I1);
\draw[black,->-=.5] (6I3)--(6Ib);
\draw[black,->-=.5] (6Ib)--(6Ia1);
\draw[black,->-=.5] (6Ia2)--(6I3);
\node[enclosed,label={right,yshift=.1cm:6}](6I6)at(0.2,-14.8){};
\draw[black,->-=.5] (6I5)--(6I6);
\draw[black,->-=.5] (6I6)--(6I4);
\draw[black,->-=.5] (6I3)--(6I6);
\draw[black,->-=.7] (6I1)--(6I6);
\draw[black,->-=.7] (6I6)--(6I2);

\node[minimum size=.1pt,label={left:6I}](6I)at(0.5,-16.7){};

\node[ellipse,minimum width=3cm,minimum height=3cm,draw](6Ja)at(6,-12.7){};
\node[enclosed,label={left,yshift=.2cm:2}](6J2)at(5.1,-13.4){};
\node[enclosed,label={right,yshift=.2cm:1}](6J1)at(6,-11.8){};\node[enclosed,label={right,yshift=.2cm:6}](6J6)at(6.9,-13.4){};
\draw[black,->-=.5] (6J2)--(6J1);
\draw[black,->-=.5] (6J2)--(6J6);
\draw[black,->-=.5] (6J1)--(6J6);
\node[enclosed,label={right,yshift=.2cm:3}](6J3)at(7.5,-16){};
\node[ellipse,minimum width=3cm,minimum height=1.3cm,draw](6Jb)at(4.5,-16){};
\node[enclosed,label={right,yshift=.3cm:4}](6J4)at(3.5,-16){};
\node[enclosed,label={left,yshift=.3cm:5}](6J5)at(5.5,-16){};
\draw[black,->-=.5] (6J5)--(6J4);
\draw[black,->-=.5] (6Jb)--(6J3);
\draw[black,->-=.5] (6Ja)--(6Jb);
\draw[black,->-=.5] (6J3)--(6Ja);

\node[minimum size=.1pt,label={left:6J}](6J)at(6.2,-16.7){};

\node[enclosed,label={left,yshift=.2cm:6}](6K6)at(10.08,-13.85){};

\node[ellipse,minimum width=2.5cm,minimum height=1.2cm,draw](6Ka)at(12,-12.3){};

\node[enclosed,label={right,yshift=.2cm:1}](6K1)at(11.08,-12.3){};
\node[enclosed,label={left,yshift=.2cm:2}](6K2)at(12.88,-12.3){};

\node[enclosed,label={right,yshift=.2cm:3}](6K3)at(13.92,-13.85){};
\node[enclosed,label={right,yshift=.2cm:4}](6K4)at(13.2,-16){};\node[enclosed,label={left,yshift=.2cm:5}](6K5)at(10.8,-16){};
\draw[black,->-=.5] (6K1)--(6K2);
\draw[black,->-=.5] (6K6)--(6Ka);
\draw[black,->-=.5] (6K6)--(6K3);
\draw[black,->-=.5] (6K3)--(6Ka);
\draw[black,->-=.5] (6Ka)--(6K4);
\draw[black,->-=.5] (6K4)--(6K3);
\draw[black,->-=.5] (6K5)--(6K3);
\draw[black,->-=.5] (6K5)--(6K4);
\draw[black,->-=.5] (6K6)--(6K4);
\draw[black,->-=.5] (6K5)--(6K6);
\draw[black,->-=.5] (6Ka)--(6K5);

\node[minimum size=.1pt,label={left:6K}](6K)at(12.65,-16.7){};

\end{tikzpicture}}
\end{adjustbox}
\caption{Finite exceptions -1-}
\label{figure 1}
\end{figure}
\begin{figure}
\begin{adjustbox}{max width=\textwidth, max height=\textheight}
\scalebox{0.5}{
\begin{tikzpicture}
\tikzset{enclosed/.style={draw,circle,inner sep=2pt,minimum size=4pt,fill=black}}
\tikzset{->-/.style={decoration={
            markings,
            mark=at position #1 with
            {\arrow{>}}},postaction={decorate}}}
            
\node[ellipse,minimum width=3cm,minimum height=3cm,draw](a)at(0,0){};
\node[enclosed,label={left,yshift=.2cm:1}](6L1)at(-0.9,-0.7){};
\node[enclosed,label={right,yshift=.2cm:2}](6L2)at(0,0.9){};\node[enclosed,label={right,yshift=.2cm:3}](6L3)at(0.9,-0.7){};
\draw[black,->-=.5] (6L1)--(6L2);
\draw[black,->-=.5] (6L1)--(6L3);
\draw[black,->-=.5] (6L2)--(6L3);
\node[enclosed,label={right,yshift=.2cm:6}](6L6)at(1.5,-3.3){};
\node[ellipse,minimum width=3cm,minimum height=1.3cm,draw](b)at(-1.5,-3.3){};
\node[enclosed,label={right,yshift=.3cm:5}](6L5)at(-2.5,-3.3){};
\node[enclosed,label={left,yshift=.3cm:4}](6L4)at(-0.5,-3.3){};
\draw[black,->-=.5] (6L4)--(6L5);
\draw[black,->-=.5] (6L6)--(b);
\draw[black,->-=.5] (b)--(a);
\draw[black,->-=.5] (a)--(6L6);

\node[minimum size=.1pt,label={left:6L}](6L)at(.65,-4){};

\node[enclosed,label={right,yshift=.2cm:1}](1)at(6.2,1){};
\node[enclosed,label={right,yshift=.2cm:2}](2)at(7.6,-.7){};
\node[enclosed,label={right,yshift=.2cm:3}](3)at(7,-2.5){};
\node[enclosed,label={right,yshift=-.2cm:4}](4)at(5.3,-3.3){};
\node[enclosed,label={left,yshift=.2cm:5}](5)at(3.6,-2.5){};
\node[enclosed,label={left,yshift=.2cm:6}](6)at(3,-.7){};
\node[enclosed,label={left,yshift=.2cm:7}](7)at(4.4,1){};

\draw[black,->-=.5] (1)--(2);
\draw[black,->-=.4] (1)--(3);
\draw[black,->-=.65] (1)--(5);
\draw[black,->-=.5] (2)--(3);
\draw[black,->-=.3] (2)--(4);
\draw[black,->-=.7] (2)--(6);
\draw[black,->-=.5] (3)--(4);
\draw[black,->-=.4] (3)--(5);
\draw[black,->-=.6] (3)--(7);
\draw[black,->-=.5] (4)--(5);
\draw[black,->-=.4] (4)--(6);
\draw[black,->-=.7] (4)--(1);
\draw[black,->-=.5] (5)--(6);
\draw[black,->-=.35] (5)--(7);
\draw[black,->-=.7] (5)--(2);
\draw[black,->-=.5] (6)--(7);
\draw[black,->-=.2] (6)--(1);
\draw[black,->-=.75] (6)--(3);
\draw[black,->-=.5] (7)--(1);
\draw[black,->-=.2] (7)--(2);
\draw[black,->-=.3] (7)--(4);

\node[minimum size=.1pt,label={left:7A}](7A)at(5.95,-4){};

\node[ellipse,minimum width=3cm,minimum height=3cm,draw](7Ba)at(11.4,0){};
\node[enclosed,label={left,yshift=.2cm:3}](7B3)at(10.6,-0.6){};
\node[enclosed,label={right,yshift=.2cm:1}](7B1)at(11.4,0.8){};\node[enclosed,label={right,yshift=.2cm:2}](7B2)at(12.2,-0.6){};
\draw[black,->-=.5] (7B1)--(7B2);
\draw[black,->-=.5] (7B3)--(7B1);
\draw[black,->-=.5] (7B3)--(7B2);
\node[enclosed,label={right,yshift=.2cm:7}](7B7)at(12.9,-3.3){};
\node[ellipse,minimum width=3cm,minimum height=3cm,draw](7Bb)at(9.9,-3.3){};
\node[enclosed,label={left,yshift=.2cm:6}](7B6)at(9.1,-3.9){};
\node[enclosed,label={right,yshift=.2cm:4}](7B4)at(9.9,-2.5){};
\node[enclosed,label={right,yshift=.2cm:5}](7B5)at(10.7,-3.9){};
\draw[black,->-=.5] (7B4)--(7B5);
\draw[black,->-=.5] (7B5)--(7B6);
\draw[black,->-=.5] (7B6)--(7B4);
\draw[black,->-=.5] (7Ba)--(7B7);
\draw[black,->-=.5] (7B7)--(7Bb);
\draw[black,->-=.5] (7Bb)--(7Ba);

\node[minimum size=.1pt,label={left:7B}](7B)at(12.35,-4){};

\node[ellipse,minimum width=3cm,minimum height=3cm,draw](7Ca)at(2.8,-6.8){};
\node[enclosed,label={left,yshift=.2cm:3}](7C3)at(2,-7.4){};
\node[enclosed,label={right,yshift=.2cm:1}](7C1)at(2.8,-6){};\node[enclosed,label={right,yshift=.2cm:2}](7C2)at(3.6,-7.4){};
\draw[black,->-=.5] (7C1)--(7C2);
\draw[black,->-=.5] (7C3)--(7C1);
\draw[black,->-=.5] (7C2)--(7C3);
\node[enclosed,label={right,yshift=.2cm:7}](7C7)at(4.3,-10.1){};
\node[ellipse,minimum width=3cm,minimum height=3cm,draw](7Cb)at(1.3,-10.1){};
\node[enclosed,label={left,yshift=.2cm:6}](7C6)at(0.5,-10.7){};
\node[enclosed,label={right,yshift=.2cm:4}](7C4)at(1.3,-9.3){};
\node[enclosed,label={right,yshift=.2cm:5}](7C5)at(2.1,-10.7){};
\draw[black,->-=.5] (7C4)--(7C5);
\draw[black,->-=.5] (7C5)--(7C6);
\draw[black,->-=.5] (7C6)--(7C4);
\draw[black,->-=.5] (7Ca)--(7C7);
\draw[black,->-=.5] (7C7)--(7Cb);
\draw[black,->-=.5] (7Cb)--(7Ca);

\node[minimum size=.1pt,label={left:7C}](7C)at(3.5,-11.2){};

\node[ellipse,minimum width=3cm,minimum height=1.5cm,draw](7Da)at(6.5,-7.1){};
\node[ellipse,minimum width=3cm,minimum height=3cm,draw](7Db)at(10.5,-8.7){};
\node[ellipse,minimum width=3cm,minimum height=1.5cm,draw](7Dc)at(6.5,-10.1){};
\node[enclosed,label={right,yshift=.3cm:1}](7D1)at(5.5,-7.1){};
\node[enclosed,label={left,yshift=.3cm:2}](7D2)at(7.5,-7.1){};
\node[enclosed,label={left,yshift=.2cm:3}](7D3)at(10.5,-7.8){};
\node[enclosed,label={left,yshift=.2cm:5}](7D5)at(9.8,-9.1){};
\node[enclosed,label={right,yshift=.2cm:4}](7D4)at(11.2,-9.1){};
\node[enclosed,label={left,yshift=.3cm:6}](7D6)at(7.5,-10.1){};
\node[enclosed,label={right,yshift=.3cm:7}](7D7)at(5.5,-10.1){};

\draw[black,->-=.5] (7D1)--(7D2);
\draw[black,->-=.5] (7D3)--(7D4);
\draw[black,->-=.5] (7D4)--(7D5);
\draw[black,->-=.5] (7D5)--(7D3);
\draw[black,->-=.5] (7D6)--(7D7);
\draw[black,->-=.5] (7Da)--(7Db);
\draw[black,->-=.5] (7Db)--(7Dc);
\draw[black,->-=.5] (7Dc)--(7Da);

\node[minimum size=.1pt,label={left:7D}](7D)at(8.5,-11){};

\node[ellipse,minimum width=5cm,minimum height=4cm,draw](7Ea)at(3.5,-14.5){};
\node[ellipse,minimum width=3cm,minimum height=3cm,draw](7Eb)at(2.7,-14.5){};
\node[ellipse,minimum width=3cm,minimum height=1.5cm,draw](7Ec)at(-1.5,-16){};
\node[enclosed,label={right,yshift=.3cm:1}](7E1)at(-1.5,-13){};
\node[enclosed,label={right,yshift=.3cm:2}](7E2)at(5.3,-14.5){};
\node[enclosed,label={left,yshift=.2cm:3}](7E3)at(2.5,-13.7){};
\node[enclosed,label={left,yshift=.2cm:5}](7E5)at(1.8,-15){};
\node[enclosed,label={right,yshift=.2cm:4}](7E4)at(3.2,-15){};
\node[enclosed,label={left,yshift=.3cm:6}](7E6)at(-0.5,-16){};
\node[enclosed,label={right,yshift=.3cm:7}](7E7)at(-2.5,-16){};

\draw[black,->-=.5] (7E1)--(7Ea);
\draw[black,->-=.5] (7E3)--(7E4);
\draw[black,->-=.5] (7E4)--(7E5);
\draw[black,->-=.5] (7E5)--(7E3);
\draw[black,->-=.5] (7E6)--(7E7);
\draw[black,->-=.5] (7Eb)--(7E2);
\draw[black,->-=.5] (7Ea)--(7Ec);
\draw[black,->-=.5] (7Ec)--(7E1);

\node[minimum size=.1pt,label={left:7E}](7E)at(.65,-17){};

\node[ellipse,minimum width=3cm,minimum height=1.5cm,draw](7Fa)at(8,-13){};
\node[ellipse,minimum width=3cm,minimum height=3cm,draw](7Fb)at(12,-14.5){};
\node[ellipse,minimum width=3cm,minimum height=1.5cm,draw](7Fc)at(8,-16){};
\node[enclosed,label={right,yshift=.3cm:5}](7F5)at(7,-13){};
\node[enclosed,label={left,yshift=.3cm:4}](7F4)at(9,-13){};
\node[enclosed,label={left,yshift=.2cm:1}](7F1)at(12,-13.7){};
\node[enclosed,label={left,yshift=.2cm:2}](7F2)at(11.3,-15){};
\node[enclosed,label={right,yshift=.2cm:3}](7F3)at(12.7,-15){};
\node[enclosed,label={left,yshift=.3cm:6}](7F6)at(9,-16){};
\node[enclosed,label={right,yshift=.3cm:7}](7F7)at(7,-16){};

\draw[black,->-=.5] (7F1)--(7F3);
\draw[black,->-=.5] (7F3)--(7F2);
\draw[black,->-=.5] (7F2)--(7F1);
\draw[black,->-=.5] (7F4)--(7F5);
\draw[black,->-=.5] (7F6)--(7F7);
\draw[black,->-=.5] (7Fa)--(7Fb);
\draw[black,->-=.5] (7Fb)--(7Fc);
\draw[black,->-=.5] (7Fc)--(7Fa);
\draw[black,->-=.5] (7F5)--(7F7);

\node[minimum size=.1pt,label={left:7F}](7F)at(10,-17){};

\end{tikzpicture}}
\scalebox{0.5}{
\begin{tikzpicture}
\tikzset{enclosed/.style={draw,circle,inner sep=2pt,minimum size=4pt,fill=black}}
\tikzset{->-/.style={decoration={
            markings,
            mark=at position #1 with
            {\arrow{>}}},postaction={decorate}}}
            
\node[ellipse,minimum width=5cm,minimum height=4cm,draw](a)at(5,-1.5){};
\node[ellipse,minimum width=1.3cm,minimum height=3cm,draw](b)at(6,-1.5){};
\node[ellipse,minimum width=3cm,minimum height=1.5cm,draw](c)at(0,-3){};
\node[enclosed,label={right,yshift=.3cm:1}](1)at(6,-2.3){};
\node[enclosed,label={left,yshift=.2cm:2}](2)at(4,-2.3){};
\node[enclosed,label={left,yshift=.2cm:3}](3)at(4,-0.7){};
\node[enclosed,label={left,yshift=.2cm:4}](4)at(1,-3){};
\node[enclosed,label={right,yshift=.2cm:5}](5)at(-1,-3){};
\node[enclosed,label={left,yshift=.2cm:6}](6)at(0,0){};
\node[enclosed,label={right,yshift=.2cm:7}](7)at(6,-0.7){};
\node[minimum size=.1pt](b1)at(5.63,-0.7){};
\node[minimum size=.1pt](b2)at(5.63,-2.3){};

\draw[black,->-=.5] (2)--(3);
\draw[black,->-=.5] (3)--(b1);
\draw[black,->-=.5] (b2)--(2);
\draw[black,->-=.5] (7)--(1);
\draw[black,->-=.5] (a)--(6);
\draw[black,->-=.5] (6)--(c);
\draw[black,->-=.5] (c)--(a);
\draw[black,->-=.5] (4)--(5);

\node[minimum size=.1pt,label={left:7G}](7G)at(2.65,-4){};

\node[enclosed,label={left,yshift=.2cm:6}](7H6)at(9.2,-.85){};

\node[ellipse,minimum width=2.5cm,minimum height=1.2cm,draw](7Ha)at(11.12,.7){};

\node[enclosed,label={right,yshift=.3cm:1}](7H1)at(10.2,.7){};
\node[enclosed,label={left,yshift=.3cm:2}](7H2)at(12,.7){};

\node[enclosed,label={right,yshift=.2cm:3}](7H3)at(13.04,-.85){};

\node[ellipse,minimum width=2.5cm,minimum height=1.2cm,draw](7Hb)at(12.32,-3){};

\node[enclosed,label={left,yshift=.3cm:4}](7H4)at(13.22,-3){};
\node[enclosed,label={right,yshift=.3cm:7}](7H7)at(11.42,-3){};

\node[enclosed,label={left,yshift=.2cm:5}](7H5)at(9.92,-3){};

\draw[black,->-=.5] (7H1)--(7H2);
\draw[black,->-=.5] (7H6)--(7Ha);
\draw[black,->-=.5] (7H6)--(7H3);
\draw[black,->-=.5] (7H3)--(7Ha);
\draw[black,->-=.5] (7Ha)--(7Hb);
\draw[black,->-=.5] (7Hb)--(7H3);
\draw[black,->-=.5] (7H5)--(7H3);
\draw[black,->-=.5] (7H5)--(7Hb);
\draw[black,->-=.6] (7H6)--(7Hb);
\draw[black,->-=.5] (7H5)--(7H6);
\draw[black,->-=.5] (7Ha)--(7H5);
\draw[black,->-=.5] (7H7)--(7H4);

\node[minimum size=.1pt,label={left:7H}](7H)at(11.72,-4){};

\node[ellipse,minimum width=3cm,minimum height=3cm,draw](7Ia)at(2,-6){};
\node[enclosed,label={left,yshift=.2cm:5}](7I5)at(1.2,-6.6){};
\node[enclosed,label={right,yshift=.2cm:4}](7I4)at(2,-5.2){};\node[enclosed,label={right,yshift=.2cm:7}](7I7)at(2.8,-6.6){};
\node[enclosed,label={right,yshift=.2cm:6}](7I6)at(3.5,-9.3){};
\node[ellipse,minimum width=3cm,minimum height=3cm,draw](7Ib)at(.5,-9.3){};
\node[enclosed,label={left,yshift=.2cm:3}](7I3)at(-.3,-9.9){};
\node[enclosed,label={right,yshift=.2cm:2}](7I2)at(0.5,-8.3){};
\node[enclosed,label={right,yshift=.2cm:1}](7I1)at(1.3,-9.9){};
\draw[black,->-=.5] (7I4)--(7I7);
\draw[black,->-=.5] (7I5)--(7I4);
\draw[black,->-=.5] (7I7)--(7I5);
\draw[black,->-=.5] (7Ia)--(7I6);
\draw[black,->-=.5] (7I6)--(7Ib);
\draw[black,->-=.5] (7Ib)--(7Ia);
\draw[black,->-=.5] (7I3)--(7I2);
\draw[black,->-=.5] (7I2)--(7I1);
\draw[black,->-=.5] (7I3)--(7I1);

\node[minimum size=.1pt,label={left:7I}](7I)at(2.65,-10.5){};

\node[ellipse,minimum width=4cm,minimum height=6cm,draw](7Ja)at(10.5,-7.3){};
\node[ellipse,minimum width=3cm,minimum height=3cm,draw](7Jb)at(10.5,-6.3){};
\node[enclosed,label={left,yshift=.2cm:1}](7J1)at(6.2,-7.9){};
\node[enclosed,label={left,yshift=.2cm:2}](7J2)at(6.2,-6.7){};\node[enclosed,label={right,yshift=.2cm:3}](7J3)at(7.4,-7.2){};
\node[enclosed,label={left,yshift=.2cm:4}](7J4)at(10,-5.7){};
\node[enclosed,label={right,yshift=.2cm:5}](7J5)at(11,-6.2){};
\node[enclosed,label={left,yshift=-.2cm:6}](7J6)at(10,-6.7){};
\node[enclosed,label={right,yshift=.2cm:7}](7J7)at(10.5,-8.9){};
\node[minimum size=.1pt](7Ja1)at(8.9,-5.9){};
\node[minimum size=.1pt](7Ja2)at(8.9,-8.9){};

\draw[black,->-=.5] (7J2)--(7J1);
\draw[black,->-=.5] (7J1)--(7J3);
\draw[black,->-=.5] (7J3)--(7J2);
\draw[black,->-=.5] (7J4)--(7J5);
\draw[black,->-=.5] (7J5)--(7J6);
\draw[black,->-=.5] (7J6)--(7J4);
\draw[black,->-=.5] (7Jb)--(7J7);
\draw[black,->-=.5] (7Ja1)--(7J2);
\draw[black,->-=.5] (7Ja2)--(7J1);
\draw[black,->-=.5] (7J3)--(7Jb);
\draw[black,->-=.5] (7J7)--(7J3);

\node[minimum size=.1pt,label={left:7J}](7J)at(9.2,-10.5){};

\node[enclosed,label={right,yshift=.2cm:6}](8A6)at(2.4,-11.5){};
\node[enclosed,label={right,yshift=.2cm:7}](8A7)at(3.8,-13.2){};
\node[enclosed,label={right,yshift=.2cm:8}](8A8)at(3.2,-15){};
\node[enclosed,label={right,yshift=.2cm:1}](8A1)at(.7,-16.45){};
\node[enclosed,label={left,yshift=.2cm:3}](8A3)at(-.2,-15){};
\node[enclosed,label={left,yshift=.2cm:4}](8A4)at(-.8,-13.2){};
\node[enclosed,label={left,yshift=.2cm:5}](8A5)at(.6,-11.5){};
\node[enclosed,label={left,yshift=.2cm:2}](8A2)at(2.3,-16.45){};
\node[minimum size=.1pt](8Aa)at(1.5,-15.9){};
\node[ellipse,minimum width=2.5cm,minimum height=1.3cm,draw](8Ab)at(1.5,-16.45){};

\draw[black,->-=.5] (8A1)--(8A2);
\draw[black,->-=.5] (8Aa)--(8A3);
\draw[black,->-=.6] (8Aa)--(8A4);
\draw[black,->-=.5] (8Aa)--(8A6);
\draw[black,->-=.5] (8A3)--(8A4);
\draw[black,->-=.6] (8A3)--(8A5);
\draw[black,->-=.25] (8A3)--(8A7);
\draw[black,->-=.5] (8A4)--(8A5);
\draw[black,->-=.7] (8A4)--(8A6);
\draw[black,->-=.43] (8A4)--(8A8);
\draw[black,->-=.5] (8A5)--(8A6);
\draw[black,->-=.8] (8A5)--(8A7);
\draw[black,->-=.54] (8A5)--(8Aa);
\draw[black,->-=.5] (8A6)--(8A7);
\draw[black,->-=.7] (8A6)--(8A8);
\draw[black,->-=.42] (8A6)--(8A3);
\draw[black,->-=.5] (8A7)--(8A8);
\draw[black,->-=.2] (8A7)--(8Aa);
\draw[black,->-=.5] (8A7)--(8A4);
\draw[black,->-=.5] (8A8)--(8Aa);
\draw[black,->-=.4] (8A8)--(8A3);
\draw[black,->-=.25] (8A8)--(8A5);

\node[minimum size=.1pt,label={left:8A}](8A)at(2.3,-17.5){};

\node[ellipse,minimum width=4cm,minimum height=6cm,draw](8Ba)at(10.5,-13.9){};
\node[ellipse,minimum width=3cm,minimum height=3cm,draw](8Bb)at(10.5,-12.9){};
\node[enclosed,label={left,yshift=.2cm:1}](8B1)at(6.2,-14.5){};

\node[ellipse,minimum width=2.5cm,minimum height=1cm,draw](8Bc)at(6.2,-13.3){};
\node[enclosed,label={right,yshift=.01cm:2}](8B2)at(6.8,-13.3){};
\node[enclosed,label={left,yshift=.01cm:8}](8B8)at(5.6,-13.3){};

\node[enclosed,label={right,yshift=.2cm:3}](8B3)at(7.4,-13.9){};
\node[enclosed,label={left,yshift=.2cm:4}](8B4)at(10,-12.3){};
\node[enclosed,label={right,yshift=.2cm:5}](8B5)at(11,-12.8){};
\node[enclosed,label={left,yshift=-.2cm:6}](8B6)at(10,-13.3){};
\node[enclosed,label={right,yshift=.2cm:7}](8B7)at(10.5,-15.5){};
\node[minimum size=.1pt](8Ba1)at(8.9,-12.5){};
\node[minimum size=.1pt](8Ba2)at(8.9,-15.5){};

\draw[black,->-=.5] (8Bc)--(8B1);
\draw[black,->-=.5] (8B1)--(8B3);
\draw[black,->-=.5] (8B3)--(8Bc);
\draw[black,->-=.5] (8B4)--(8B5);
\draw[black,->-=.5] (8B5)--(8B6);
\draw[black,->-=.5] (8B6)--(8B4);
\draw[black,->-=.5] (8Bb)--(8B7);
\draw[black,->-=.5] (8Ba1)--(8Bc);
\draw[black,->-=.5] (8Ba2)--(8B1);
\draw[black,->-=.5] (8B3)--(8Bb);
\draw[black,->-=.5] (8B7)--(8B3);
\draw[black,->-=.5] (8B8)--(8B2);

\node[minimum size=.1pt,label={left:8B}](8B)at(9.2,-17.5){};

\end{tikzpicture}}
\end{adjustbox}
\caption{Finite exceptions -2-}
\label{figure 2}
\end{figure}

\begin{figure}
\begin{adjustbox}{max width=\textwidth, max height=\textheight}
\scalebox{0.5}{
\begin{tikzpicture}
\tikzset{enclosed/.style={draw,circle,inner sep=2pt,minimum size=4pt,fill=black}}
\tikzset{->-/.style={decoration={
            markings,
            mark=at position #1 with
            {\arrow{>}}},postaction={decorate}}}
            
\node[ellipse,minimum width=2.5cm,minimum height=2.5cm,draw](a)at(0,0){};
\node[enclosed,label={left,yshift=.2cm:1}](1)at(-0.6,-0.4){};
\node[enclosed,label={left,yshift=.2cm:2}](2)at(0,0.55){};
\node[enclosed,label={right,yshift=.2cm:3}](3)at(0.6,-0.4){};
\node[ellipse,minimum width=2.5cm,minimum height=2.5cm,draw](b)at(0,-3.5){};
\node[minimum size=.1pt,label={left:X}](X)at(0.5,-3.5){};
\draw[black,->-=.5] (1)--(2);
\draw[black,->-=.5] (2)--(3);
\draw[black,->-=.5] (3)--(1);
\draw[black,->-=.5] (b)--(a);

\node[minimum size=.1pt,label={left:$F_1(n)$}](F_1)at(1,-5.2){};

\node[ellipse,minimum width=4.2cm,minimum height=2.5cm,draw](2Fa)at(3.8,0){};
\node[ellipse,minimum width=2cm,minimum height=0.9cm,draw](2Fc)at(4.6,-0.5){};
\node[enclosed,label={left,yshift=.2cm:1}](2F1)at(2.8,-0.5){};
\node[enclosed,label={left,yshift=.2cm:2}](2F2)at(3.8,0.6){};
\node[enclosed,label={left,yshift=.01cm:3}](2F3)at(4.2,-0.5){};
\node[enclosed,label={right,yshift=.01cm:4}](2F4)at(5.05,-0.5){};
\node[ellipse,minimum width=2.5cm,minimum height=2.5cm,draw](2Fb)at(3.8,-3.5){};
\node[minimum size=.1pt,label={left:X}](2FX)at(4.3,-3.5){};
\draw[black,->-=.5] (2F1)--(2F2);
\draw[black,->-=.5] (2F2)--(2Fc);
\draw[black,->-=.5] (2Fc)--(2F1);
\draw[black,->-=.5] (2F4)--(2F3);
\draw[black,->-=.5] (2Fb)--(2Fa);

\node[minimum size=.1pt,label={left:$F_2(n)$}](F_2)at(4.6,-5.2){};

\node[ellipse,minimum width=2.5cm,minimum height=2.5cm,draw](3Fa)at(9.3,-1.5){};
\node[minimum size=.1pt,label={left:X}](3FX)at(9.7,-1.5){};
\node[enclosed,label={left,yshift=.2cm:1}](3F1)at(7.85,-3.6){};
\node[enclosed,label={left,yshift=.2cm:2}](3F2)at(7.1,-1.9){};
\node[enclosed,label={right,yshift=.01cm:3}](3F3)at(9.3,-2.3){};
\node[minimum size=.1pt](3Fa1)at(8.3,-1.9){};
\node[minimum size=.1pt](3Fa2)at(8.7,-2.4){};

\draw[black,->-=.5] (3F2)--(3F1);
\draw[black,->-=.5] (3Fa2)--(3F1);
\draw[black,->-=.5] (3Fa1)--(3F2);
\draw[black,->-=.5] (3F1) to [out=0,in=-110,looseness=0.8] (3F3);

\node[minimum size=.1pt,label={left:$F_3(n)$}](F_3)at(9.3,-5.2){};

\node[ellipse,minimum width=2.5cm,minimum height=2.5cm,draw](4Fa)at(13.8,-1.5){};
\node[minimum size=.1pt,label={left:X}](4FX)at(14.2,-1.4){};
\node[enclosed,label={left,yshift=.01cm:1}](4F1)at(11.85,-3.6){};
\node[enclosed,label={left,yshift=.2cm:2}](4F2)at(11.6,-1.9){};
\node[enclosed,label={right,yshift=.01cm:3}](4F3)at(13.8,-2.3){};
\node[minimum size=.1pt](4Fa1)at(12.8,-1.9){};
\node[minimum size=.1pt](4Fa2)at(13.2,-2.4){};
\node[ellipse,minimum width=2.2cm,minimum height=1cm,draw](4Fb)at(12.35,-3.6){};
\node[enclosed,label={right,yshift=.01cm:4}](4F4)at(12.85,-3.6){};

\draw[black,->-=.5] (4F2)--(4Fb);
\draw[black,->-=.5] (4Fa2)--(4Fb);
\draw[black,->-=.5] (4Fa1)--(4F2);
\draw[black,->-=.5] (4Fb) to [out=10,in=-70,looseness=0.9] (4F3);
\draw[black,->-=.5] (4F1)--(4F4);

\node[minimum size=.1pt,label={left:$F_4(n)$}](F_4)at(13,-5.2){};

\node[ellipse,minimum width=2.5cm,minimum height=2.5cm,draw](5Fa)at(3.5,-7.2){};
\node[minimum size=.1pt,label={left:X}](5FX)at(4,-7.2){};
\node[enclosed,label={left,yshift=.01cm:1}](5F1)at(1.75,-9.2){};
\node[ellipse,minimum width=2.5cm,minimum height=2.5cm,draw](5Fb)at(0,-7.2){};
\node[minimum size=.1pt,label={left:Y}](5FY)at(.5,-7.2){};
\node[enclosed,label={right,yshift=.01cm:2}](5F2)at(3.5,-8){};
\node[minimum size=.1pt](5Fa1)at(2.6,-7.9){};
\node[minimum size=.1pt](5Fb1)at(.9,-7.9){};

\draw[black,->-=.5] (5Fa)--(5Fb);
\draw[black,->-=.5] (5Fa1)--(5F1);
\draw[black,->-=.5] (5Fb1)--(5F1);
\draw[black,->-=.5] (5F1) to [out=0,in=-110,looseness=0.8] (5F2);

\node[minimum size=.1pt,label={left:$F_5(n)$}](F_5)at(2.3,-10.5){};

\node[ellipse,minimum width=2.5cm,minimum height=2.5cm,draw](a)at(9.7,-7.2){};
\node[minimum size=.1pt,label={left:X}](X)at(10.2,-7.2){};
\node[enclosed,label={left,yshift=.01cm:3}](3)at(7.4,-9.2){};
\node[enclosed,label={right,yshift=.01cm:2}](2)at(9.7,-8){};
\node[minimum size=.1pt](a1)at(8.8,-7.9){};
\node[minimum size=.1pt](c1)at(7.1,-7.9){};
\node[ellipse,minimum width=2.2cm,minimum height=1cm,draw](b)at(7.95,-9.2){};
\node[enclosed,label={right,yshift=.01cm:1}](1)at(8.5,-9.2){};
\node[ellipse,minimum width=2.5cm,minimum height=2.5cm,draw](c)at(6.2,-7.2){};
\node[minimum size=.1pt,label={left:Y}](Y)at(6.7,-7.2){};

\draw[black,->-=.5] (3)--(1);
\draw[black,->-=.5] (a1)--(b);
\draw[black,->-=.5] (c1)--(b);
\draw[black,->-=.5] (b) to [out=10,in=-90,looseness=0.9] (2);
\draw[black,->-=.5] (3)--(1);
\draw[black,->-=.5] (a)--(c);

\node[minimum size=.1pt,label={left:$F_6(n)$}](F_6)at(8,-10.5){};

\node[ellipse,minimum width=2.5cm,minimum height=2.5cm,draw](a)at(13.8,-7.2){};
\node[minimum size=.1pt,label={left:Y}](Y)at(14.3,-7.2){};
\node[enclosed,label={left,yshift=.01cm:3}](3)at(11.85,-9.2){};
\node[enclosed,label={left,yshift=.2cm:1}](1)at(11.6,-7.7){};
\node[minimum size=.1pt](a1)at(12.8,-7.7){};
\node[minimum size=.1pt](a2)at(13.1,-8){};
\node[ellipse,minimum width=2.2cm,minimum height=1cm,draw](b)at(12.35,-9.2){};
\node[enclosed,label={right,yshift=.01cm:2}](2)at(12.85,-9.2){};

\draw[black,->-=.5] (1)--(a1);
\draw[black,->-=.5] (a2)--(b);
\draw[black,->-=.5] (b)--(1);
\draw[black,->-=.5] (3)--(2);

\node[minimum size=.1pt,label={left:$F_7(n)$}](F_7)at(13,-10.5){};

\node[ellipse,minimum width=2.5cm,minimum height=2.5cm,draw](a)at(0,-12.5){};
\node[ellipse,minimum width=2.5cm,minimum height=2.5cm,draw](b)at(0,-16){};
\node[minimum size=.1pt,label={left:3A}](3A)at(0.5,-16){};
\node[minimum size=.1pt,label={left:X}](X)at(0.45,-12.5){};

\draw[black,->-=.5] (b)--(a);

\node[minimum size=.1pt,label={left:$F_8(n)$}](F_8)at(0.65,-17.8){};

\node[ellipse,minimum width=2.5cm,minimum height=2.5cm,draw](a)at(3.2,-12.5){};
\node[ellipse,minimum width=2.5cm,minimum height=2.5cm,draw](b)at(3.2,-16){};
\node[minimum size=.1pt,label={left:5A}](5A)at(3.7,-16){};
\node[minimum size=.1pt,label={left:X}](X)at(3.65,-12.5){};

\draw[black,->-=.5] (b)--(a);

\node[minimum size=.1pt,label={left:$F_9(n)$}](F_9)at(3.85,-17.8){};

\node[ellipse,minimum width=2.5cm,minimum height=2.5cm,draw](a)at(6.4,-12.5){};
\node[ellipse,minimum width=2.5cm,minimum height=2.5cm,draw](b)at(6.4,-16){};
\node[minimum size=.1pt,label={left:7A}](7A)at(6.9,-16){};
\node[minimum size=.1pt,label={left:X}](X)at(6.85,-12.5){};

\draw[black,->-=.5] (b)--(a);

\node[minimum size=.1pt,label={left:$F_{10}(n)$}](F_10)at(7.05,-17.8){};

\node[ellipse,minimum width=4cm,minimum height=5.5cm,draw](a)at(10.6,-14.5){};
\node[enclosed,label={left,yshift=.2cm:1}](1)at(10.6,-13){};

\node[ellipse,minimum width=3cm,minimum height=3cm,draw](b)at(10.6,-15.5){};
\node[minimum size=.1pt,label={left:X}](X)at(11.1,-15.5){};

\node[minimum size=.1pt](a1)at(12.4,-13.7){};
\node[minimum size=.1pt](a2)at(12.4,-15.3){};

\node[enclosed,label={right,yshift=.2cm:2}](2)at(13.6,-13.7){};
\node[enclosed,label={right,yshift=.2cm:3}](3)at(13.6,-15.3){};

\draw[black,->-=.5] (b)--(1);
\draw[black,->-=.5] (2)--(3);
\draw[black,->-=.5] (3)--(a2);
\draw[black,->-=.5] (a1)--(2);

\node[minimum size=.1pt,label={left:$F_{11}(n)$}](F_11)at(13,-17.8){};

\end{tikzpicture}}
\scalebox{0.5}{
\begin{tikzpicture}
\tikzset{enclosed/.style={draw,circle,inner sep=2pt,minimum size=4pt,fill=black}}
\tikzset{->-/.style={decoration={
            markings,
            mark=at position #1 with
            {\arrow{>}}},postaction={decorate}}}
            
\node[ellipse,minimum width=4cm,minimum height=5.5cm,draw](a)at(0,0){};
\node[ellipse,minimum width=2.2cm,minimum height=1cm,draw](c)at(0,1.5){};
\node[enclosed,label={right,yshift=.01cm:1}](1)at(0.5,1.5){};
\node[enclosed,label={left,yshift=.01cm:4}](4)at(-0.5,1.5){};

\node[ellipse,minimum width=3cm,minimum height=3cm,draw](b)at(0,-1){};
\node[minimum size=.1pt,label={left:X}](X)at(0.5,-1){};

\node[minimum size=.1pt](a1)at(1.8,0.8){};
\node[minimum size=.1pt](a2)at(1.8,-0.8){};

\node[enclosed,label={right,yshift=.2cm:2}](2)at(3,0.8){};
\node[enclosed,label={right,yshift=.2cm:3}](3)at(3,-0.8){};

\draw[black,->-=.5] (b)--(c);
\draw[black,->-=.5] (4)--(1);
\draw[black,->-=.5] (2)--(3);
\draw[black,->-=.5] (3)--(a2);
\draw[black,->-=.5] (a1)--(2);

\node[minimum size=.1pt,label={left:$F_{12}(n)$}](F_12)at(0.65,-3.5){};

\node[ellipse,minimum width=4cm,minimum height=5.5cm,draw](a)at(8,0){};
\node[enclosed,label={left,yshift=.2cm:1}](1)at(8,1.3){};

\node[ellipse,minimum width=3cm,minimum height=3cm,draw](b)at(8,-1){};
\node[minimum size=.1pt,label={left:X}](X)at(8.5,-1){};

\node[minimum size=.1pt](a1)at(9.7,1.2){};
\node[minimum size=.1pt](a2)at(9.8,-0.8){};

\node[enclosed,label={right,yshift=.2cm:2}](2)at(12,1.2){};

\node[ellipse,minimum width=3cm,minimum height=3cm,draw](c)at(12,-0.8){};
\node[enclosed,label={right,yshift=.2cm:3}](3)at(12.7,-1.3){};
\node[enclosed,label={left,yshift=.2cm:4}](4)at(11.3,-1.3){};
\node[enclosed,label={right,yshift=.2cm:5}](5)at(12,0){};

\draw[black,->-=.5] (b)--(1);
\draw[black,->-=.5] (2)--(c);
\draw[black,->-=.5] (c)--(a2);
\draw[black,->-=.5] (a1)--(2);
\draw[black,->-=.5] (3)--(4);
\draw[black,->-=.5] (4)--(5);
\draw[black,->-=.5] (5)--(3);

\node[minimum size=.1pt,label={left:$F_{13}(n)$}](F_13)at(8.65,-3.5){};

\node[ellipse,minimum width=4cm,minimum height=5.5cm,draw](a)at(5,-7){};
\node[ellipse,minimum width=2.2cm,minimum height=1cm,draw](c)at(5,-5.5){};
\node[enclosed,label={right,yshift=.01cm:1}](1)at(5.5,-5.5){};
\node[enclosed,label={left,yshift=.01cm:6}](6)at(4.5,-5.5){};

\node[ellipse,minimum width=3cm,minimum height=3cm,draw](b)at(5,-8){};
\node[minimum size=.1pt,label={left:X}](X)at(5.5,-8){};

\node[minimum size=.1pt](a1)at(6.7,-5.8){};
\node[minimum size=.1pt](a2)at(6.8,-7.8){};

\node[enclosed,label={right,yshift=.2cm:2}](2)at(9,-5.8){};

\node[ellipse,minimum width=3cm,minimum height=3cm,draw](d)at(9,-7.8){};
\node[enclosed,label={right,yshift=.2cm:3}](3)at(9.7,-8.3){};
\node[enclosed,label={left,yshift=.2cm:4}](4)at(8.3,-8.3){};
\node[enclosed,label={right,yshift=.2cm:5}](5)at(9,-7){};

\draw[black,->-=.5] (b)--(c);
\draw[black,->-=.5] (6)--(1);
\draw[black,->-=.5] (2)--(d);
\draw[black,->-=.5] (d)--(a2);
\draw[black,->-=.5] (a1)--(2);
\draw[black,->-=.5] (3)--(4);
\draw[black,->-=.5] (4)--(5);
\draw[black,->-=.5] (5)--(3);

\node[minimum size=.1pt,label={left:$F_{14}(n)$}](7I)at(5.65,-10.5){};

\end{tikzpicture}}
\end{adjustbox}
\caption{The infinite families of exceptions}
\label{figure 3}

\end{figure}

The infinite families of exceptions are denoted by $E_i(n)=(F_i(n);P)$, where $F_i(n)$ is the tournament on $n$ vertices illustrated in figure \ref{figure 3}. For each $E_i(n)$, we define the set $S$ of vertices of $F_i(n)$ which are not origin of $P$ together with the conditions on the tournament. And finally, we give the paths $P$ with origin $x\notin S$. Below the list of infinite families of exceptions:\\
Exception $E_1(n)=(F_1(n),(1,n-2))$; $S=\{1,2,3\}$. Conditions: $|X|\geq 1$. Paths: for any $u\in X,\,P=u132I_{X-u}$.
\\Exception $E_2(n)=(F_2(n),(2,n-3))$; $S=\{3,4\}$. Conditions: $|X|\geq 1$. Paths: $P=1234I_X$, $P=2314I_X$ and for any $u\in X,\,P=u132I_{X-u}$.
\\Exception $E_3(n)=(F_3(n),(1,n-2))$; $S=\{1,3\}$. Conditions: $N^+(3)\neq\{2\}$ and 3 is an ingenerator of $T(X)$ (in particular there exists a Hamiltonian directed inpath $u3R_1$ of $T(X)$ and another $3vR_2$ of $T(X)$). Paths: $P=21u3R_1$ and for any $y\in X\setminus \{3\}$, $P=y132I_{X-y}$.
\\Exception $E_4(n)=(F_4(n),(2,n-3))$; $S=\{1,4\}$. Conditions: $N^+(3)\neq\{2\}$ and 3 is an ingenerator of $T(X)$ (in particular there exists a Hamiltonian directed inpath $u3R_1$ of $T(X)$ and another $3vR_2$ of $T(X)$). Paths: $P=214u3R_1$, $P=3241vR_2$ and for any $y\in X\setminus \{3\}$, $P=u132I_{X-u}$.
\\Exception $E_5(n)=(F_5(n),(1,n-2))$; $S=\{1,2\}$. Conditions: $n\geq 5$, $|Y|\geq 2$ and 2 is an ingenerator of $T(X)$. Paths: for any $x\in X\setminus \{2\}$, $P=x1I_YI_{X-x}$, and for any $y\in Y$, $P=y1I_{Y-y}I_X$.
\\Exception $E_6(n)=(F_6(n),(2,n-3))$; $S=\{1,3\}$. Conditions: $|Y|\geq 2$ and 2 is an ingenerator of $T(X)$. Paths: for any $x\in X$, $P=xz13I_{Y-z}I_{X-x}$ (for a given $z\in Y$) and for any $y\in Y$, $P=y31I_{Y-y}I_X$.
\\Exception $E_7(n)=(F_7(n),(1,1,n-3))$; $S=\{2,3\}$. Conditions: $T(Y)$ is not $3-$cycle and $|Y|\geq 3$. Paths: since $T(Y)$ is not a $3-$cycle, there is a path $Q=-(1,n-5)$ in $T(Y)$ (this is clear if $T(Y)$ is not reducible; if $T(Y)$ is strong, there exists a vertex $w\in Y$ such that $T(Y)-w$ is also strong, such a vertex is certainly an origin of $Q$ since it has an inneighbor that is an origin of a Hamiltonian directed path of $T(Y)-w$), we then have $P=1Q23$ and for any $y\in Y$, $P=y231O_{Y-y}$.
\\Exception $E_8(n)=(F_8(n),(n-4,1,1,1))$; $S=X$. Conditions: $3A$ is the $3-$cycle, its set of vertices is $\{1,2,3\}$, we furthermore need $|X|\geq 2$. Paths: $P=1O_{X-u}2u3$, $P=2O_{X-u}1u3$ and $P=3O_{X-u}2u1$ for a given $u\in X$.
\\Exception $E'_8(n)=(F_8(n),(n-4,2,1))$; $S=X$. Conditions: $3A$ is the $3-$cycle, its set of vertices is $\{1,2,3\}$, we furthermore need $|X|\geq 2$. Paths: $P=1O_{X-u}32u$, $P=2O_{X-u}13u$ and $P=3O_{X-u}21u$ for a given $u\in X$.
\\Exception $E_9(n)=(F_9(n),(n-6,1,1,1,1,1))$; $S=X$. Conditions: $5A$ is the $2-$regular tournament, its set of vertices is $\{1,2,3,4,5\}$, we furthermore need $|X|\geq 2$. Paths: $P=1O_{X-u}2u453$, $P=2O_{X-u}3u514$, $P=3O_{X-u}4u125$, $P=4O_{X-u}5u231$ and $P=5O_{X-u}1u342$ for a given $u\in X$.
\\Exception $E'_9(n)=(F_9(n),(n-6,2,1,1,1))$; $S=X$. Conditions: $5A$ is the $2-$regular tournament, its set of vertices is $\{1,2,3,4,5\}$, we furthermore need $|X|\geq 2$. Paths: $P=1O_{X-u}32u45$, $P=2O_{X-u}43u51$, $P=3O_{X-u}54u12$, $P=4O_{X-u}15u23$ and $P=5O_{X-u}21u34$ for a given $u\in X$.
\\Exception $E_{10}(n)=(F_{10}(n),(n-8,1,1,1,1,1,1,1))$; $S=X$. Conditions: $7A$ is the Paley tournament, its set of vertices is $\{1,2,3,4,5,6,7\}$, we furthermore need $|X|\geq 2$. Paths: $P=1O_{X-u}2u45376$, $P=2O_{X-u}3u56417$, $P=3O_{X-u}4u67521$, $P=4O_{X-u}5u71632$, $P=5O_{X-u}6u12743$, $P=6O_{X-u}7u23154$ and $P=7O_{X-u}1u34265$ for a given $u\in X$.
\\Exception $E'_{10}(n)=(F_{10}(n),(n-8,2,1,1,1,1,1))$; $S=X$. Conditions: $7A$ is the Paley tournament, its set of vertices is $\{1,2,3,4,5,6,7\}$, we furthermore need $|X|\geq 2$. Paths: $P=1O_{X-u}32u4657$, $P=2O_{X-u}43u5761$, $P=3O_{X-u}54u6172$, $P=4O_{X-u}65u7213$, $P=5O_{X-u}76u1324$, $P=6O_{X-u}17u2435$ and $P=7O_{X-u}21u3546$ for a given $u\in X$.
\\Exception $E_{11}(n)=(F_{11}(n),(1,1,n-3))$; $S=\{1,2\}$. Conditions: $|X|\geq 2$. Paths: $P=31O_X2$ and for any $u\in X,\,P=u1O_{X-u}23$.
\\Exception $E_{12}(n)=(F_12(n),(2,1,n-4))$; $S=\{1,4\}$. Conditions: $|X|\geq 2$. Paths: $P=231O_X4$ and for any $u\in X,\,P=u41O_{X-u}23$.
\\Exception $E_{13}(n)=(F_{13}(n),(1,1,n-3))$; $S=\{1,2\}$. Conditions: $|X|\geq 2$. Paths: $P=3425O_X1$, $P=4523O_X1$, $P=5324O_X1$ and for all $u,v\in X,\,P=u1v2345O_{X\setminus \{u,v\}}$.
\\Exception $E_{14}(n)=(F_{14}(n),(2,1,n-4))$; $S=\{1,6\}$. Conditions: $|X|\geq 2$. Paths: for every vertex $u\in X$, $P=24u53O_{X-u}61$, $P=3u16245O_{X-u}$, $P=4u16253O_{X-u}$, $P=5u16234O_{X-u}$ and $P=u61345O_{X-u}2$.

\bibliographystyle{abbrv}
\bibliography{references}

\begin{thebibliography}{10}

\bibitem{Alsp}
B.~Alspach and M.~Rosenfeld.
\newblock Realization of certain generalized paths in tournaments.
\newblock {\em Discrete Mathematics}, 34(2):199--202, 1981.

\bibitem{Ban}
J.~Bang-Jensen, G.~Gutin, and A.~Yeo.
\newblock Hamiltonian cycles avoiding prescribed arcs in tournaments.
\newblock {\em Combinatorics, Probability and Computing}, 6(3):255--261, 1997.

\bibitem{hanna}
C.~{Bou Hanna}.
\newblock Paths in tournaments: A simple proof of rosenfeld's conjecture.
\newblock {\em arXiv preprint arXiv:2011.14394}, 2020.

\bibitem{ES}
A.~{El Sahili}.
\newblock Seminars on graph theory.
\newblock {\em Seminars}, 2025.

\bibitem{Ez}
A.~{El Zein}.
\newblock Oriented hamiltonian paths in tournaments with an arc removed.
\newblock {\em Discrete Mathematics}, 348(11):114578, 2025.

\bibitem{For}
R.~Forcade.
\newblock Parity of paths and circuits in tournaments.
\newblock {\em Discrete Mathematics}, 6(2):115--118, 1973.

\bibitem{Fra}
P.~Fraïssé and C.~Thomassen.
\newblock Hamiltonian dicycles avoiding prescribed arcs in tournaments.
\newblock {\em Graphs and Combinatorics}, 3(3):239--250, 1987.

\bibitem{Gr}
B.~Grünbaum.
\newblock Antidirected hamiltonian paths in tournaments.
\newblock {\em Journal of Combinatorial Theory, Series B}, 11(3):249--257,
  1971.

\bibitem{Hav}
F.~Havet and S.~Thomassé.
\newblock Oriented hamiltonian paths in tournaments: A proof of rosenfeld's
  conjecture.
\newblock {\em Journal of Combinatorial Theory, Series B}, 78(2):243--273,
  2000.

\bibitem{Ros}
M.~Rosenfeld.
\newblock Antidirected hamiltonian paths in tournaments.
\newblock {\em Journal of Combinatorial Theory, Series B}, 12(1):93--99, 1972.

\bibitem{Red}
L.~Rédei.
\newblock Ein kombinatorischer satz.
\newblock {\em Acta Litterarum ac Scientiarum Szeged}, 7:39--43, 1934.

\bibitem{Thomason}
A.~Thomason.
\newblock Paths and cycles in tournaments.
\newblock {\em Transactions of the American Mathematical Society},
  296(1):167--180, 1986.

\bibitem{Thomassen}
C.~Thomassen.
\newblock Edge-disjoint hamiltonian paths and cycles in tournaments.
\newblock {\em Proceedings of the London Mathematical Society}, 45(1):151--168,
  1982.

\end{thebibliography}

\end{document}